\documentclass[a4paper,12pt]{article}
\usepackage{amsmath}
\usepackage{amssymb}
\usepackage{amsthm}
\usepackage{graphicx}
\usepackage{mathrsfs}
% \usepackage{refcheck}
%%%%%%%%%%%%%%%%%%%%%%%%%%%%%%%%%%%%%%%%%%%%%%
\newtheorem{theorem}{Theorem}[section]

\newtheorem{proposition}[theorem]{Proposition}

\newtheorem{remark}[theorem]{Remark}

\newcommand{\PP}{\mathbb{P}}

\newcommand{\RR}{\mathbb{R}}
\newcommand{\ZZ}{\mathbb{Z}}

\newcommand{\E}{{\cal E}}

\newcommand{\G}{{\cal G}}

\newcommand{\wpi}{{\widehat{\pi}}}
\newcommand{\wP}{{\widehat{P}}}

\newcommand{\cO}{{\cal O}}

\newcommand{\U}{{\cal U}}

\newcommand{\mS}{{\mathfrak{S}}}
\newcommand{\vp}{{\vec{p}}} 
\newcommand{\vb}{{\vec{b}}}

\newcommand{\1}{{\bf{1}}}
\newcommand{\me}{\mathfrak{e}}

\newcommand{\mN}{\mathfrak{N}}

\begin{document}

\title{\itshape Maxentropy completion and properties of 
some partially defined Stationary Markov chains}

%\author{Pierre Collet and Servet Mart\'{\i}nez}

\author
{Pierre Collet$^{\textup{(a)}}$, 
Servet Mart\'{\i}nez$^{\textup{(b)}}$\\
{\small $^{\textup{(a)}}$ CPHT, CNRS, Ecole Polytechnique, 
IP Paris, Palaiseau, France}\\
{\small e-mail: pierre.collet@polytechnique.edu}\\
{\small $^{\textup{(b)}}$ Universidad de Chile,
 UMR 2071 UCHILE-CNRS}\\
{\small Departamento de Ingenier\'{\i}a Matem\'atica
and Centro de Modelamiento Matem\'atico}\\
{\small Casilla 170-3 Correo 3, Santiago, Chile}\\
{\small e-mail: smartine@dim.uchile.cl}\\
}

\maketitle

\begin{abstract}
  We consider a stationary Markovian evolution with values on a
%NOUVEAU: pour reforver la finitude demande Referee 1 que tu as aussi mis en bas
  finite disjointly
  partitioned set space $I\sqcup \E$.  The evolution is visible (in the
  sense of knowing the transition probabilities) on the states in $I$
  but not for the states in $\E$.  One only knows some partial
  information on the transition probabilities on $\E$, the input and
  output transition probabilities and some constraints of the transition probabilities on
  $\E$. Under some conditions we supply the transition probabilities
  on $\E$ that satisfies the maximum entropy principle.
\end{abstract}

\bigskip

\noindent{\bf Keywords:} Hidden states, maximal entropy principle, 
stationarity, incomplete Markov chain.

%\bigskip

%\noindent {\bf AMS Subject Classification:\,} 60J10. 

\section{Introduction}
Discrete time Markov chains on a finite state space are completely
described by their matrix of transitions probabilities. We consider a situation
where only a subset of the transition probabilities is known.
More precisely we  consider an evolution on a set of states $I\sqcup \E$
(disjoint union).
The states in  $I$ are visible  and the set $\E$ is called a 
labyrinth (composed of invisible states). 
In a first stage the states in $\E$ act
as absorbing states but their persistence or reappearance 
 lead to consider these states in a different way. 
The transition probabilities  within the labyrinth are unknown. 
One only knows some basic relations: the input and output 
transition probabilities  and eventually  communication constraints in the
labyrinth. The transition probabilities between the states in $I$ are
known and those between $I$ and $\E$ also. 
%RECENT Ja'i ajoute les deux lignes qui suivent pur clarifier la question I=\emptyset 
%mentionne par le referee fait. Il faudrait ajouter qu'on a assume que les ensembles 
% sont nonvides sauf la mentio de la Remark, dans la ligne de la lettre 
% Page 7 line -11: The referee is right, but we think Remark 3.2 is enough for  the case $I=\emptyset$.
We assume $I$ and $\E$ are nonempty sets
(there 
%CAMBIAR on change is only one parce qu'il y a deux 
are two mentions to the case $I=\emptyset$, in Remarks \ref{rem3.0}) and \ref{rem3}).

\medskip

\noindent A natural question is to complete the description of the chain,
i.e. to find the 
transition probabilities into the labyrinth so that the matrix of
transition probabilities is Markovian. There are many possible
solutions to this problem and 
we propose a  maximum entropy approach.

\medskip

\noindent This partially defined process is an extremely simplified
model for some phenomena:

\smallskip

\begin{itemize} 

\item Disease epidemics that emerge and disappear
in time. A disease may emerge at some location due to human 
contact with a biological reservoir of some microorganism, 
diffuse in some geographical area, and when finished
retreat unseen to some biological reservoir again.
In this case the observable states represent the sizes of 
the healthy and infected populations in geographic areas 
while the labyrinth corresponds to the
biological reservoirs of the microorganism. A transition from 
the labyrinth to an observable state is an outbreak;

\item Underground rivers, that can disappear at some sinkholes, 
continues underground and reemerge further downstream. There can 
be a cave system having an underground water dynamics. At a first 
stage the caves can be  considered absorbing states, but this 
does not describe the whole phenomenon because the river reemerges
after an underground water evolution.

\end{itemize}

\smallskip

\noindent One can conceive many other concrete examples of partially
defined Markov processes.

\medskip

\noindent We assume a stationary behavior of the
total process, not only  on the visible states $I$ 
but  also  for the evolution on the states $\E$
(which is unseen). Given the 
data (some transition probabilities associated with the states in $I$),
we try to reconstruct
the transition probabilities of the process in the unseen part by
using the maximum entropy principle. Our problem falls quite naturally
in this context. Some average quantities (probability of two successive
states) are known and we asked for some other similar quantities. As
described by Jaynes, "...in making inference on the basis of partial
information we must use that probability distribution which has the
maximum entropy to whatever is known.  This is the only unbiased
assignment we can make..." \cite{jaynes} p.623.  See also 
\cite{jaynes1}, and \cite{Sivia} Section 5.

\medskip

%%% Dans ce qui suit on a modifie la redaction du paragprahe qui suit pour mettre touts
%%%les idees proches a ce qu'on assume connu est le probleme d'estimation 
\noindent Therefore, we assume we are given some transition probabilities
and the proposal for  the remaining unknown ones is based upon 
the maximum entropy principle. 
%%%
We emphasize that we do not assume 
sample paths of the Markov chain are available, 
%only the values of some transition probabilities. 
in other words we do not consider observations of a process  
%although this could be a way of obtaining the data required. 
%%%
and no estimation is made.
%%%
There is a large literature devoted to estimate a transition matrix that fits 
incomplete observation data, by using the maximum likelihood 
criterion, like in \cite{kd},  or by some Bayesian approach, where
there is a prior of the transition matrix  
and given the data one seeks to compute the posterior distribution,
see \cite{pfb}.  
%%%Ici il fini la redaction du paragraphe

\medskip

\noindent The paper is organized as follows. In section \ref{model} we describe
the set of known transition probabilities  and introduce some more
hypothesis. We then derive the maximum entropy
solution satisfying the Markovian constraints.  
%NOUVEAU ce qui suit concernant un commenatire du referee 2
We are aware that the hypothesis $(H1)$ and $(H2)$ are very strong, but up no now
they turn to be necessary for describing the 
maxentropy behavior inside the labyrinth.

\medskip

\noindent Section \ref{secSt} is devoted to 
the more general case   
where  a communication matrix inside the labyrinth is imposed.
If all the states communicate the maximum entropy solution is Bernoulli. But the
problem turns harder when the communication matrix is not trivial. In fact
there could be no matrix that satisfies the constraints. In Section \ref{Secmc}
 the constraints are written in a linear way and in Section \ref{Secal} we 
use Farkas' lemma to give a criterion for the existence of a solution. In Proposition \ref{propp2} 
we give a simple sufficient condition for the existence of solution. 
In Section \ref{SecNTL} we study the case when there are several labyrinths
that are connected only by the visible states, and show that all the computations related to
maximum entropy matrices are reduced to the case when there is a unique labyrinth. 

\medskip

\noindent  In section \ref{qsd} we relate our problem and hypothesis to the
theory of quasi stationary distributions.  We show that under our
hypotheses the chain 
can be reconstructed 
%CAMBIAR: eliminer only 
from the evolution on the visible states and
in the labyrinth  
separately, using families of independent geometric random variables
%On ajoute la phrase qui suit pour mieux discuter la reference  \cite{NIPS} plus loin
that mark the times of restarting.

%\medskip

%%%Je preferras d'eliminer le paragraphe qui suit parce qu'il apparait le mot estimation,
%en plus dasn le paraggraphe qui commence par As said les references du pragaphe sont 
%discutees avec plus de detail
%\noindent Inference estimates for imprecise Markov chains and incompletely described
%Markov chains have been studied in various works, in order to obtain
%bounds on average quantities. We refer to  
%\cite{incertain} and \cite{data}, and references therein. 

%\medskip

%%%Le paragragrahe a ete elimine parce qu'il est inclus dans un paragraphe en bas
%%%sur la meme reference our on le discute avec plus de detail
%\noindent  In \cite{NIPS} the inverse problem for partially observed Markov
%chains with restart has been considered in a parametric setting for the unknown
%transition probabilities and with an objective (cost)  function different from ours.
% On eleimine jusq'ici 

\medskip

\noindent 
As explained above we assume 
%In our setting 
 only a subset of the
transition probabilities is known and we look to complete these transitions
probabilities by using the maximal entropy criterion.  This makes part
of the problem where the transition matrix is known to belong to some
class of matrices, and one seeks the one that satisfies a certain
optimization criterion. This is the case in \cite{data}, but the
optimization is on an infinite horizon reward on all the states.  
In \cite{incertain}  the 
unknown is a transition matrix that satisfies some general order
constraints on the coefficients and its stationary vector, which is
unknown, maximizes some linear functional. But, in our case the set of
constraints is more specific allowing to handle the optimization of
the entropy.

\medskip

\noindent In \cite{NIPS} the inverse problem for partially observed Markov
chains with restart has been considered in a parametric setting for the unknown
transition probabilities and with an objective (cost)  function different from
ours.  
%In \cite{NIPS} a very general problem is studied: 
More precisely, 
%NOUVEAU Reponse Referee 2, question 1: les trois lihnes qui suivent ont ete ajoutes
there is a transition probability kernel $p(\cdot,\cdot)$ among the states and a distribution ${\tilde p}$ 
on the states, and at each time the chain evolves with the kernel $p(\cdot,\cdot)$ with probability 
$(1-\beta)$ or it restarts with ${\tilde p}$ with probability $\beta\in (0,1)$.
%
%NOUVEAU Reponse Referee 2, question 1:: Les deux lignes qui suivent ont ete elimines
%the chain can restart with some fixed probability according to some distribution
%and it can change the transition probabilities. 
The process is assumed
to be ergodic and the authors look for for a stationary
probability. This search is based upon the knowledge of the stationary
vector up to a constant and the hitting probabilities prior to
restart, in a fixed region. The optimization is made on the parameters
guiding the evolution.  There are some differences with our setting, 
%CAMBIAR: Au lieu de a main one 
one of them 
is that the optimization functions are different. 
%CAMBIAR: Eliminer toutes le phrases qui suivent jusqu'a Section \section, parce que c'est trop 
%en detail et en fait les cadres sont bien differents
%Moreover,
%even if our setting can be thought as a restarting problem, in our
%case the probability of restarting is state dependen (it is not with a fixed probability), 
%but, our Hypothesis 1 imposes that 
%and there is a  unique known restarting distribution when entering to the visible states. 
%CAMBIAR: LA PHRASE QUE SUIT N?ETAIT PAS CORRECTE an unknown restarting  
%distribution when exiting from them to enter to the labyrinth.

\medskip

%%% Le paragraphe qui suit a ete elimine parec les idees ont etes incluses dans un 
%%%paragraphe precedany ou on a mis le probleme d'estimation 
%\noindent There is a large literature devoted to estimate the transition matrix that fit 
%incomplete observation data, incompleteness can have many characteristics, for instance 
%because partial data of the trajectory is collected. 
%The matrix estimation can be made by using the maximum likelihood 
%criterion, like in \cite{kd},  or by some Bayesian approach, where there is a prior of the transition matrix 
%and given the data one seeks to compute the posterior distribution, see \cite{pfb}. 
%%%Ici il fini le paragraphe elimine

\section{Partially defined Markov chains, hypothesis 
and Maxentropy. }
\label{model}

\noindent Our starting point is an irreducible stationary Markov chain, 
%NOUVEAU: Seulement pour souligner que tu avait mis finite en reponse Referee 1
$X=(X_n: n\ge \ZZ)$ with values in the finite set $I\sqcup \E$. Its transition
matrix is denoted by $P=(P(a,b): a,b\in I\sqcup \E)$ and $\pi$ denotes  
stationary distribution, so $\pi^t P=\pi^t$. 
The irreducibility of $P$ implies $\pi>0$.
%NOUVEAU: Les deux lignes en bas en est reponse Ok a la question 3 du referee 2
We will assume that the restricted transition matrix $P_{I,I}=(P(a,b): a,b\in I)$ is also an 
irreducible matrix.

\medskip

\noindent By vectors we always mean 
column vectors and we add the transposition superscript ${}^t$  
to denote the associated row vector. We also denote
$P_{J\times K}=(P(a,b): i\in J, k\in J)$ the restricted submatrix
when $J,K\subseteq I\sqcup \E$. 

\medskip

\noindent When the chain $X$ takes values in $I$, this is
visible. The states in the labyrinth $\E$ can be distinguished and
visible only when there
is a nonzero probability of  emerging from $\E$ to $I$ or when there 
is a nonzero probability of transition  from
some state in $I$ to $\E$.
But once the chain is  inside and before leaving  the labyrinth 
this is not visible.

\begin{remark}
\label{rem0}
Let us see that the pair formed by the visible process $\cO$ (defined
below) and the 
Markov process $X$ does not constitute a Hidden Markov chain.  
For  this purpose let $\me$ be a new state, that 
codes the states of the process that are in $\E$ but they are not seen during the process. 
With this notation, the visible process $\cO=(\cO_n:n\in \ZZ)$ is given by
$$
\cO_n=
\begin{cases}
X_n & \hbox{ if } X_n\in I;\\
X_n & \hbox{ if } X_n\in \E \hbox{ and } X_{n-1}\in I \hbox{ or } 
X_{n+1}\in I;\\
\me & \hbox{ if } X_n\in \E \hbox{ and } X_{n-1}\in \E \hbox{ and }
X_{n+1}\in \E.
\end{cases}
$$
A necessary condition in order that the pair $(\cO,X)$ is 
a Hidden Markov chain 
is that: for all $n\ge 1$, $c\in I\sqcup \E\sqcup \{\me\}$, 
$a_k\in I\sqcup \E$, 
$k=0,..,n$, one has:
\begin{equation}
\label{pomp}
\PP(\cO_n=c \, | \, X_k=a_k, k\le n)=\PP(\cO_n=c \, | \, X_n=a_n).
\end{equation}
\noindent But,  this condition 
is not satisfied when $c=\me$, $a_{n-1}\in I$, $a_n\in \E$. In fact,
since $X_{n-1}=a_{n-1}\in I$ then the visible state at coordinate $n$ is 
$\cO_n=X_n=a_{n}\in \E$ which is different from $\me$, and so 
$\PP(\cO_n=\me | X_k=a_k, k\le n)=0$. But 
%NOUVEAU: ON ajoute la ligne que suit our commencer la reponse 2 du referee 2
we can have
$\PP(\cO_n=\me| X_n=a_n)=
%NOUVEAU: En bas il y avait une erreur, c'estait un \in au lieu de \not\in, parce que 
%pour avoir l'ete non observe \me au temps n il faut que en n,n-1,n+1 on soit dans \E  
\PP(X_{n-1}\in \E, 
X_{n}\in \E, X_{n+1}\in \E | X_n=a_n)>0$. This 
%CAMBIAR on a mis fulfilled et au lieu de by on a mis when using    
is fulfilled in our case 
when using 
%CAMBIAR on a change la redaction par la phrase en bas 
the hypothesis $(H2)$ and the condition $\pi_\E>0$ stated below. 
%CAMBIAR: On a elimine and  $\pi_\E>0$.  
%NOUVEAU: On ajoute les trois lignes en bas en reponse a la question 2 du referee
Also, when it exists, 
the maximum entropy matrix $P_{\E\times \E}$ fulfills this condition, because
the labyrinth network is assumed to be irreducible (See Theorem \ref{maintheorem}).

\medskip

\noindent In \cite{king}, relation $(1)$ in Section 2,  the above 
condition (\ref{pomp}) is part of  
the Partially Observed Markov Process (POMP) model.
For the definition and applications of Hidden Markov models for
discrete random sequences one can see Sections $13.1$ and $13.2$ in 
\cite{bishop}. $\Box$
\end{remark}

\noindent Let $\pi_I=(\pi(i): i\in I)$ 
and $\pi_\E=(\pi(d):d\in \E)$ denote 
the restrictions of $\pi$ to $I$ and $\E$ respectively.

\medskip

\noindent Summarizing, we  assume 
that the transition submatrices $P_{I\times I}$, $P_{I\times \E}$ and
$P_{\E\times I}$ are known. In particular 
$P_{I\times (I\sqcup \E)}$ is known. We also assume that $\pi_I$ is
known. These are the only available data.

\medskip

\noindent If  a large number of observations of the 
dynamics of the chain is available, these data can be estimated 
 by averaging using the Law of Large Numbers
when the chain is  in $I$ and when 
it enters from $I$ to $\E$ or emerges from $\E$ to $I$. 

\medskip

\noindent We denote by $\pi(a)$ the weight of $a\in I\sqcup \E$ and by 
$\pi(A)=\sum_{a\in A} \pi(a)$ the weight of $A\subseteq I\sqcup \E$. 

\medskip

\noindent We assume the existence of some states $i\in I$
such that $P(i,\E)>0$ (where $P(i,\E)=\sum_{d\in \E} P(i,d)$), 
hence the restricted matrix $P_{I\times I}$ 
is a substochastic kernel. 

\medskip

\noindent The first assumption we make is the following one: 
when the chain emerges from the labyrinth $\E$
to the visible  states $I$, it is always with the stationary law 
$\pi_I$, so we assume 
$$
(H1) \quad \quad\quad \quad\quad\quad P_{\E\times I}={\1}_{\E}\pi^t_I
$$
where ${\1}_{\E}$ is the unit vector of dimension the cardinal 
number $|\E|$. This hypothesis means that the law of the outbreak on 
$I$ has distribution $\pi^t_I$. In the case where a sample of the chain
is available 
 one could look for elaborating a test 
that permits to analyze if hypothesis $(H1)$ is satisfied.

\medskip

\noindent Now we give a context for the next assumption.
Since $\pi$ is stationary one has 
$\pi_I^t
=\pi_I^t P_{I\times I}+\pi_{\E}^t P_{\E\times I}$ and 
by using $(H1)$ one gets 
$$ 
\pi_I^t=\pi_I^t P_{I\times I}+\pi_{\E}^t {\1}_{\E} \pi^t_I
=\pi_I^t P_{I\times I}+\pi(\E)\pi^t_I. 
$$ 
(Recall $\pi(\E)=\sum_{d\in \E}\pi(d)$, $\pi(I)=\sum_{i\in I}\pi(i)$). 
Since $1-\pi(\E)=\pi(I)$, then a consequence 
of $(H1)$ is 
\begin{equation}
\label{Eq1'}
\pi_I^t P_{I\times I}=\pi(I) \pi_I^t, 
\end{equation}
and so $\pi_I$ is the left Perron-Frobenius eigenvector 
of $P_{I\times I}$ with Perron-Frobenius eigenvalue $\pi(I)$.

\medskip

\noindent We have only partial information on the transition matrix 
$P_{\E\times \E}$. 
First, one has that all sums of the rows of $P_{\E\times \E}$ are 
constant because $P$ is Markov and so from $(H1)$,
$$
{\1}_{\E}=P_{\E\times I} {\1}_{I}+P_{\E\times \E}{\1}_{\E}=
{\1}_{\E}\pi_I^t {\1}_{I}+P_{\E\times \E}{\1}_{\E}
=\pi(I){\1}_\E+P_{\E\times \E}{\1}_{\E}.
$$
Then,
\begin{equation}
\label{Eq1}
P_{\E\times \E}{\1}_{\E}=\pi(\E){\1}_\E.
\end{equation}
Therefore, a consequence of $(H1)$ is that 
$\pi(\E)^{-1}P_{\E\times \E}$ is a stochastic matrix. 
We have that $\pi(\E)^{-1}\pi_{\E}$ is a probability vector. 
We assume the following property:  
$\pi(\E)^{-1}\pi_{\E}$ is a stationary distribution for 
$\pi(\E)^{-1}P_{\E\times \E}$. This  is equivalent to,
$$
(H2) \quad \quad\quad\quad\quad\quad 
\pi(\E) \pi_\E^t=\pi_\E^t P_{\E\times \E}.
$$
So, $\pi_\E$ is the left Perron-Frobenius eigenvector of 
$P_{\E\times \E}$ with Perron-Frobenius eigenvalue $\pi(\E)$.

\medskip

\noindent Note that the hypothesis $(H1)$ can be tested from observing 
the transitions  from the labyrinth to the visible states, 
but this is not the case of $(H2)$. 

\medskip

\noindent From (\ref{Eq1}) and $(H2)$ we have 
$\pi_\E^t P_{\E\times \E}{\1}_\E=\pi(\E)^2$. 
The stationarity of $\pi$ implies
$$
\pi_\E^t=\pi_I^t P_{I\times \E}+\pi_{\E}^t P_{\E\times \E}
$$
and so from $(H2)$ one gets
$$
\pi_I^t P_{I\times \E}=\pi_{\E}^t-\pi_\E^t P_{\E\times \E}
=(1-\pi(\E))\pi_\E=\pi(I)\pi_\E.
$$
and then
\begin{equation}
\label{Eq3}
\pi_{\E}=\pi(I)^{-1} \pi_I^t P_{I\times \E},
\end{equation}
that is $\pi(e)=\pi(I)^{-1} \sum_{i\in I} \pi(i) P(i,e)$ for $e\in \E$.

\medskip

\noindent Hence, from hypotheses $(H1)$ and $(H2)$ one knows
$\pi_\E$. As said, we have  $\pi_I$ and $P_{I\times \E}$ from
the visible evolution. But, based upon observation of a sample, we cannot 
test that $\pi(\E)^{-1}\pi_\E$ is the stationary distribution for 
$\pi(\E)^{-1} P_{\E\times \E}$, which gives the 
dynamics conditioned to be in $\E$. This occurs even if $(\pi_I,\pi_\E)$ 
is the stationary distribution for the dynamics on $I\sqcup \E$. 
This is why the hypothesis $(H2)$ cannot be tested.

\medskip

\noindent The entropy of the stationary Markov chain $X=(X_n)$
is given by, 
$$
h(X)=-\sum_{a\in I\sqcup \E} \pi(a)
\sum_{b\in I\sqcup \E}P(a,b) \log P(a,b),
$$
so, using (H1)
\begin{eqnarray}
\nonumber
h(X)&=&-\sum_{d\in \E} \pi(d)
\sum_{b\in I\sqcup \E}P(d,b) \log  P(d,b)
-\sum_{i\in I} \pi(i) \sum_{b\in I\sqcup \E} P(i,b) \log  P(i,b)\\
\nonumber
&=& -\sum_{i\in I} \pi(i) \sum_{b\in I\sqcup \E} P(i,b) \log  P(i,b)
-\sum_{d\in \E} \pi(d) \sum_{j\in I}\pi(j)\log \pi(j)\\
\label{Eq4}
&{}&\; -\sum_{d\in \E} \pi(d) \sum_{e\in \E} P(d,e) \log  P(d,e).
\end{eqnarray}

\noindent According to  Jaynes maximal entropy principle 
 we look for maximizing $h(X)$, with 
the following knowledge:
$$
\pi_I, P_{I\times (I\cup \E)}, P_{\E\times I}. 
$$  
So, the first term of the (\ref{Eq4}) is fixed.  
Now,  from assumptions $(H1)$ and $(H2)$ we also know $\pi_\E$.
So, the second term in  (\ref{Eq4}) is also fixed. Hence, 
the maximization of $h(X)$ is equivalent to maximize 
$$
H'(P_{\E\times \E})=-\sum_{d\in \E} \pi(d) \sum_{e\in \E} P(d,e) \log  
P(d,e),
$$
with $\pi_\E=\pi(I)^{-1} \pi_I^t P_{I\times \E}$ known. So, one 
seeks to
maximize $H'(P_{\E\times \E})$ with the matrix $P_{\E\times \E}$
subject to (\ref{Eq1}) and $(H2)$. 

\medskip

\section{Partially defined Markov chains with communication constraints.}
\label{secSt}
This section  will be mainly devoted to the case where there are some communications constraints
among the states of the labyrinth. Firstly we will consider the case when there 
is no constraint of this type, in other words  the states in $\E$ communicate among themselves.

\subsection{When all states in the labyrinth communicate} 

\begin{theorem}
When the unique constraints are (\ref{Eq1}) and $(H2)$, 
the maximum entropy completion of the  chain is given by  
$P_{\E\times \E}={\1}_\E \pi_\E^t$, 
that is the transitions are Bernoulli: $P(d,e)=\pi(e)$, $d,e\in \E$. 
\end{theorem}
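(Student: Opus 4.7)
The plan is to treat the maximization as a concave program with linear equality constraints and apply Lagrange multipliers. The objective $H'(P_{\E\times\E})=-\sum_{d\in\E}\pi(d)\sum_{e\in\E}P(d,e)\log P(d,e)$ is strictly concave on the open positive orthant (recall $\pi(d)>0$ by irreducibility of $P$), while the constraints (\ref{Eq1}) (row sums equal $\pi(\E)$) and $(H2)$ (weighted column sums equal $\pi(\E)\pi(e)$) are linear. The feasible set is nonempty because the candidate $P(d,e)=\pi(e)$ itself satisfies both constraints. Since the derivative of $-x\log x$ diverges at the origin, any optimizer lies in the interior and is characterized by the first-order (KKT) equations, which then determine it uniquely.

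Next I would introduce multipliers $\lambda_d$ for the row constraints and $\mu_e$ for the $(H2)$ constraints, form the Lagrangian
$$
L=H'(P_{\E\times\E})-\sum_{d\in\E}\lambda_d\Bigl(\sum_{e\in\E}P(d,e)-\pi(\E)\Bigr)-\sum_{e\in\E}\mu_e\Bigl(\sum_{d\in\E}\pi(d)P(d,e)-\pi(\E)\pi(e)\Bigr),
$$
and set $\partial L/\partial P(d,e)=0$ for every $d,e\in\E$. This yields the stationary equation
$$
-\pi(d)\bigl(\log P(d,e)+1\bigr)-\lambda_d-\pi(d)\mu_e=0,
$$
whose solution has the multiplicative form $P(d,e)=A_d\,B_e$ with $A_d=\exp\bigl(-1-\lambda_d/\pi(d)\bigr)$ and $B_e=\exp(-\mu_e)$. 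This rank-one factorization is the essential structural consequence of the first-order conditions.

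To finish I would substitute the ansatz back into the two constraints. The row constraint gives $A_d\sum_{e\in\E}B_e=\pi(\E)$, which forces $A_d$ to be independent of $d$; call the common value $A$. Then $(H2)$ reads $A\,\pi(\E)\,B_e=\pi(\E)\pi(e)$, so $AB_e=\pi(e)$, and therefore $P(d,e)=\pi(e)$ for every pair $(d,e)\in\E\times\E$. This is precisely the Bernoulli completion $P_{\E\times\E}=\1_\E\pi_\E^t$ claimed in the theorem.

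I do not expect any serious obstacle. The only minor subtlety is that the two families of equality constraints carry one linear dependence (both aggregate to $\pi(\E)^2$), but this is harmless because we use the full system rather than a minimal basis, and the strict concavity of Shannon entropy on the interior together with the existence of a feasible interior point guarantees that the critical point produced by the Lagrange equations is the unique global maximum.
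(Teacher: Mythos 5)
Your proof is correct, but it follows a genuinely different route from the paper. The paper's argument is a direct Gibbs-type estimate: it splits $H'(P_{\E\times\E})$ into a relative-entropy-like term plus a cross term, applies $\log x\le x-1$ together with (\ref{Eq1}) to show the first term is $\le 0$, and then uses $(H2)$ to evaluate the remaining term, obtaining the explicit bound $H'(P_{\E\times\E})\le-\pi(\E)\sum_{e\in\E}\pi(e)\log\pi(e)$ with equality exactly at $P(d,e)=\pi(e)$. That argument needs no existence, interiority, or uniqueness machinery. You instead treat the problem as a concave program with affine constraints and run the Lagrange/KKT analysis: your stationarity computation, the rank-one form $P(d,e)=A_dB_e$, and the back-substitution into the two constraint families are all correct (and the noted linear dependence among the constraints is indeed harmless, since the constraints are affine and no constraint qualification is at issue). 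The only steps you pass over quickly are the existence of a maximizer (the feasible set is a compact polytope, so this is immediate) and the boundary argument; the latter you do address via the divergence of $(-x\log x)'$ at $0^+$, using the interior feasible point $P(d,e)=\pi(e)$ as the perturbation direction. What each approach buys: the paper's proof is shorter and entirely elementary, while your multiplier approach is the one that scales to the constrained setting of Section \ref{secSt} --- indeed the product form $A_dB_e$ you derive is exactly the $\alpha(d)\beta(e)$ structure of Theorem \ref{maintheorem}, so your argument is essentially that theorem specialized to the case where all states in the labyrinth communicate.
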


\begin{proof}
When the unique constraints are (\ref{Eq1}) and $(H2)$, 
the maximum of $H'(P_{\E\times \E})$ is obtained with 
$P_{\E\times \E}={\1}_\E \pi_\E^t$.
This is well-known and can be proved as follows,
$$
H'(P_{\E\times \E})=\sum_{d\in \E} \pi(d) \sum_{e\in \E} P(d,e) \log  
(\pi(e)/P(d,e))-\sum_{d\in \E} \pi(d)\sum_{e\in \E} P(d,e) \log \pi(e).
$$
By using $\log(x)\le x-1$ and (\ref{Eq1}) we get,
\begin{eqnarray*}
&{}& \sum_{d\in \E} \pi(d) \sum_{e\in \E} P(d,e) \log (\pi(e)/P(d,e))
\le \sum_{d\in \E} \pi(d) \sum_{e\in \E} P(d,e) (\pi(e)/P(d,e)-1)\\
&{}& \; \le \sum_{d\in \E} \pi(d) \sum_{e\in \E} \pi(e)- 
\sum_{d\in \E} \pi(d) \sum_{e\in \E} P(d,e)= \pi(\E)^2-\pi(\E)^2=0.
\end{eqnarray*}
Hence, 
\begin{eqnarray*}
H'(P_{\E\times \E})&\le& -\sum_{d\in \E} \pi(d)\sum_{e\in \E} 
P(d,e) \log(\pi(e)) =-\sum_{e\in \E} \log(\pi(e)) 
\sum_{d\in \E} \pi(d)P(d,e)\\
&=&-\pi(\E)\sum_{e\in \E} \pi(e) \log(\pi(e)),
\end{eqnarray*}
where $(H2)$ was used to establish the last inequality.
This inequality is an equality 
for  the transitions $P(d,e)=\pi(e)$ with $d,e\in \E$.
Hence, in the absence of other constraint the maximum of the entropy 
is realized with a Bernoulli.
%%% On elimine la phrase qui suit parce qu'il n'a rien a voir avec la preuve, il va mieux dans le Remark
%This case was described in \cite{sm}. 
\end{proof}

\begin{remark}
\label{rem3.0}
%Ici on l'ajoute la phrase precedante
The Bernoulli case was described in \cite{sm}. 
When $I=\emptyset$, one has $\pi(\E)=1$, hence the unique restriction $(H2)$ 
on the matrix $P_{\E\times \E}$ implies it is a stochastic matrix. The
maximum entropy matrix is the uniform one $P(d,e)=1/|\E|$ for all  
$d,e\in \E$.$\Box$
\end{remark}

\subsection{Constraints on the communication 
matrix in the labyrinth.}
\label{Secmc}
\noindent Let us define $\wpi$ on $I\sqcup \E$ the normalized restrictions 
to $I$ and $\E$, 
%NOUVEAU: On ajoute cela parce qu'on ecrit \wi(i) \wpi(d)..wt donc pour ne pas mettre subindex dans ce cas
%Aussi on a mis These plus bas pour ameillorer la redaction
that is $\wpi=\wpi_I$ on $I$ and $\wpi=\wpi_\E$ on $\E$, 
where
$$
\wpi_I=(\pi(I))^{-1}\pi_I \, \hbox{ and } \, \wpi_\E=(\pi(\E))^{-1}\pi_\E.
$$
%NOUVEAU: Les trois lignes qui suivent
These are probability vectors on $I$ and $\E$ respectively. When necessary we will extend these
vectors to $I\cup \E$ by putting 
$\wpi_I(d)=0$ for $d\in \E$ and $\wpi_\E(i)=0$ for $i\in I$. In this case, $\pi=\pi(I)\wpi_I+\pi(\E)\wpi_\E$.

\medskip

\noindent We define
$$
\wP=(\pi(\E))^{-1}P_{\E\times \E}.
$$ 
From (\ref{Eq1}), this is a stochastic matrix of size $|\E|\times |\E|$. 
We denote by 
$Z$ a Markov shift with transition matrix $\wP$ and stationary 
distribution $\wpi_\E$. Note that 
\begin{eqnarray*} 
H'(P_{\E\times \E})&=&-\sum_{d\in \E} \pi(d) \sum_{e\in \E} P(d,e) 
\log P(d,e)\\ 
&=&-\pi(\E)^2 \sum_{d\in \E} \wpi(d) \sum_{e\in \E} \wP(d,e) 
\log(\pi(\E)\wP(d,e))\\ 
&=&-\pi(\E)^2 h(Z)+\pi(\E)^2\log(\pi(\E)). 
\end{eqnarray*} 
Hence, the maximization of $H'(P_{\E\times \E})$ such that 
$\pi(\E)$ is known and  (\ref{Eq1}) and $(H2)$
are satisfied, is equivalent to maximize 
the entropy $h(Z)$ of the Markov shift $Z$, 
subject to the knowledge of its stationary distribution $\wpi_\E$.

\medskip

\noindent We will assume one also knows the communication matrix 
$L=(L(e,d): e,d\in \E)$ in the labyrinth. This information 
is $0-1$ valued: 
$L(e,d)=1$ means the state $d$ communicates  
with $e$ in the labyrinth, and  $L(e,d)=0$ means this communication is
forbidden. We assume the labyrinth  
is an irreducible network, namely  
%CAMBIAR; on a supprime is 
$L$ is an irreducible matrix. 
In other words
for all $d,e$ there is a path $d_0=d,..,d_k=e$ such that $L(d_i,d_{i+1})=1$
for $i=0,..,k-1$. For $d\in \E$ we define,
$$
L_d=\{e\in \E: L(d,e)=1\} \hbox{ and } L^d=\{e\in \E: L(e,d)=1\},
$$
which are the set of states that follow $d$ and the set of states 
that precede $d$, respectively. By definition, 
$e\in L_d$ if and only if $d\in L^e$. We assume that the transition 
matrix $\wP$ satisfies 
$$
(H3) \quad \quad\quad\quad\quad \quad L(d,e)=0 \Rightarrow \wP(d,e)=0.
$$
So, we must maximize
$$
h(Z)=-\sum_{d\in \E} \wpi(d)\sum_{e\in L_d} \wP(d.e)\log \wP(d,e),
$$
subject to the set constraints
\begin{eqnarray}
\label{Eq9.11}
&{}&\forall d\in \E: \; \sum_{e\in L_d} \wP(d,e)=1 \,;\\
\label{Eq9.21}
&{}& \forall e\in \E: \; \sum_{d\in L^e} \wpi(d) \wP(d,e)=\wpi(e)\, .
\end{eqnarray}

\begin{theorem}
\label{maintheorem}
Assuming that the constraints \eqref{Eq9.11} and \eqref{Eq9.21} are
feasible, the maximum entropy completion of the  chain satisfying
hypothesis $(H3)$ is given by 
$$
\wP(d,e)=\alpha(d)\beta(e) {\bf 1}(e\in L_d)\;,
$$
where 
%On met la numeration de l'equation ici pour ne pas la repeter dans la preuve
\begin{equation}
\label{Eq8}
\forall e\in \E: \; \beta(e)=\frac{\wpi(e)}{\sum_{d\in L^e} \wpi(d) 
\frac{1}{\sum_{c\in L_d} \beta(c)}}\;,
\end{equation}
and
%On met la numeration de l'equation ici pour ne pas la repeter dans la preuve
\begin{equation}
\label{Eq6}
\forall d\in \E: \; \alpha(d)=\frac{1}{\sum_{e\in L_d} \beta(e)}\;.
\end{equation}
%%%On ajoute le paragraphe suivant pour avoir un enonce plus complet su theoreme
%NOUVEAU: La phrase qui suit est pour la reponse au question 2 du referee 2, la matrix est irreducible
The function $\alpha$ and $\beta$ are strictly positive and the matrix $\wP$ is irreducible.
Moreover, the condition 
\begin{equation}
\label{suf}
\forall d\in \E:\quad L(d,d)=1,
\end{equation}
is sufficient in order that the
constraints \eqref{Eq9.11} and \eqref{Eq9.21} are feasible, that is
under (\ref{suf})  there always exists a nonnegative matrix $\wP$ satisfying them.
\end{theorem}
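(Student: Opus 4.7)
The plan is to set this up as a strictly concave maximization of $h(Z)$ over the convex compact polytope $F$ consisting of matrices $\wP$ whose support lies in $L$ and which satisfy the row-sum constraints \eqref{Eq9.11} and stationarity constraints \eqref{Eq9.21}. Continuity on a compact set gives existence of a maximizer, and the strict concavity of $-x\log x$ together with the linearity of the constraints gives uniqueness. The first substantive step is then to apply Lagrange multipliers: attach $\lambda_d$ to the $d$-th row-sum constraint and $\mu_e$ to the $e$-th stationarity constraint, and differentiate the Lagrangian with respect to $\wP(d,e)$ for $e\in L_d$. This yields $-\wpi(d)(1+\log\wP(d,e))-\lambda_d-\mu_e\wpi(d)=0$, whence the optimum must factor as $\wP(d,e)=\alpha(d)\beta(e){\bf 1}(e\in L_d)$, with $\alpha(d)=\exp(-1-\lambda_d/\wpi(d))$ and $\beta(e)=\exp(-\mu_e)$. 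Since both are exponentials, strict positivity of $\alpha$ and $\beta$ is built in.

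Next I would substitute this ansatz back into the constraints. Inserting into \eqref{Eq9.11} gives $\alpha(d)\sum_{e\in L_d}\beta(e)=1$, which rearranges to \eqref{Eq6}. Inserting into \eqref{Eq9.21} gives $\beta(e)\sum_{d\in L^e}\wpi(d)\alpha(d)=\wpi(e)$, and substituting the expression for $\alpha(d)$ just obtained produces exactly the fixed-point relation \eqref{Eq8}. Because $\alpha(d)\beta(e)>0$ on every pair with $L(d,e)=1$, the matrix $\wP$ has exactly the same zero-pattern as $L$ on the off-support entries and is positive elsewhere; the assumed irreducibility of $L$ therefore transfers to $\wP$.

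For the sufficient condition, observe that when $L(d,d)=1$ for every $d\in\E$ the identity matrix on $\E$ lies in $F$: its support is contained in $L$, its rows sum to one, and $\wpi^t$ is trivially left-invariant under it, so both \eqref{Eq9.11} and \eqref{Eq9.21} hold. The main obstacle in the plan is the Lagrangian step, which tacitly assumes the maximizer lies in the relative interior of $F$, i.e.\ that $\wP(d,e)>0$ whenever $L(d,e)=1$. The standard way to justify this is to exploit the infinite slope of $-x\log x$ at $0$: any feasible matrix having a zero entry on an edge of $L$ can be strictly improved in entropy by shifting an infinitesimal amount of mass into that entry along any feasible direction. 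One therefore has to argue that the linear kernel cut out by \eqref{Eq9.11}--\eqref{Eq9.21} on matrices supported in $L$ admits such a direction at any boundary feasible point; this is the delicate reduction from mere feasibility to relative-interior feasibility, and it is where irreducibility of $L$ enters essentially.
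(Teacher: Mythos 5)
Your proposal is correct and, for the main part of the theorem, follows essentially the same route as the paper: Lagrange multipliers give the product form $\wP(d,e)=\alpha(d)\beta(e)\,{\bf 1}(e\in L_d)$, and substituting this ansatz into the row-sum and stationarity constraints yields exactly (\ref{Eq6}) and (\ref{Eq8}); positivity of $\alpha$ and $\beta$ together with irreducibility of $L$ then gives irreducibility of $\wP$. Your explicit remarks on existence and uniqueness (compactness plus strict concavity of $-x\log x$, using $\wpi>0$) are a useful addition that the paper leaves implicit. Where you genuinely diverge is the feasibility claim under $L(d,d)=1$: you observe that the identity matrix on $\E$ is feasible (support contained in $L$, stochastic rows, $\wpi^t$ trivially left-invariant), whereas the paper recasts feasibility as a linear system $D\vp=\vb$, $\vp\ge 0$, and invokes Farkas' lemma (Propositions \ref{propp1} and \ref{propp2}), showing that any infeasibility certificate $(u,v,w)$ would have to satisfy $u(i)+v(i)\wpi(i)\ge 0$ for all $i$, contradicting (\ref{rest1}). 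Your argument is shorter and entirely elementary; the paper's linear-programming machinery buys a general feasibility criterion that is of interest precisely when $L$ has vanishing diagonal entries, which is why Section \ref{Secal} is developed in any case.

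One caveat: as you note yourself, the Lagrangian step tacitly assumes the maximizer charges every edge of $L$, and your sketch flags but does not close this relative-interior issue (positivity of $\alpha,\beta$ via the exponential form presupposes it). The paper is no more rigorous on this point --- it applies Lagrange multipliers in the same way --- but it then argues positivity a posteriori, allowing $\alpha,\beta\ge 0$ in the product form: $\alpha>0$ because $\wP$ is stochastic, and $\beta(e)>0$ because $\beta(e)=0$ together with (\ref{Eq9.21}) would force $\wpi(e)=0$, contradicting $\pi>0$. If you want a complete write-up, either adopt that a posteriori argument or carry out the infinite-slope perturbation argument you sketch.
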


\begin{proof}
By using Lagrange multipliers one gets that the maximum entropy is 
attained for transition probabilities of the form
%NOUVEAU: on numerote l'equation qui suit parce qu'on fait reference a elle plus bas
\begin{equation}
\label{EqN1}
\wP(d,e)=\alpha(d)\beta(e) {\bf 1}(e\in L_d).
\end{equation}
The functions $\alpha$ and $\beta$ defined on $\E$ are nonnegative. 
Since the matrix $\wP$ is stochastic one gets that 
$\alpha$ is strictly positive. If some $\beta(e)=0$ one will obtain 
$\wP(d,e)=0$ for all $d\in L^e$ and (\ref{Eq9.21}) gives $\wpi(e)=0$,
hence 
also $\pi(e)=0$, which contradicts the irreducibility of $P$.
Since the constraints are satisfied, one gets:
%\begin{equation}
%\label
(\ref{Eq6}) holds and 
%\forall d\in \E: \; \alpha(d)=\frac{1}{\sum_{e\in L_d} \beta(e)}
%\end{equation}
\begin{equation}
\label{Eq7}
\forall e\in \E: \; \left(\sum_{d\in L^e} \wpi(d) 
\alpha(d))\right) \beta(e)=\wpi(e).
\end{equation}
So, 
%On ajoute la phrase qui suit
by using (\ref{Eq6}) one gets that
$\beta$ satisfies
%is determined by
%\begin{equation}
(\ref{Eq8}).
%\forall e\in \E: \; \beta(e)=\frac{\wpi(e)}{\sum_{d\in L^e} \wpi(d) 
%\frac{1}{\sum_{c\in L_d} \beta(c)}}, 
%\end{equation}
%and 
From (\ref{Eq6}), $\alpha$ is determined by $\beta$.  
%NOUVEAU: les lignes qui suivenet sont pour la reponse au question 2 du referee 2
So, the functions $\alpha$ and $\beta$ are strictly positive, 
then from the shape of $\wP$ in (\ref{EqN1}), and since $L$ is irreducible, the irreducibility of $\wP$ 
is obtained.

\medskip

%On ajoute la phrase qui suit
\noindent The last part of the theorem will be shown in Proposition \ref{propp2}
in the next Section.
\end{proof}

\medskip

\noindent Notice that one can 
always assume that certain $\beta(e_0)=1$. In fact we can divide 
both sides of (\ref{Eq8}) by $\beta(e_0)$ and 
we obtain analogous relations 
 but with $\beta(e)/\beta(e_0)$ instead of $\beta(e)$.  
So, there are left $|\E|-1$ parameters to be determined.

\medskip

\noindent We can see that in some cases there is no solution and this 
means that the restrictions given by matrix $(L(d,e): d,e\in \E)$ are not 
the good ones or that the assumptions $(H2)$ is not satisfied (this 
is the one that cannot be observed). Let us give an example that does not
have a solution. Take $\E=\{1,2,3\}$ 
and assume the stationary measure $\wpi$ satisfies
$\wpi(1)>\wpi(2)+\wpi(3)$. Then, one can check that for the 
%NOUVEAU: On elimine la ligne qui suit en response au referee 2
%$2-$periodic 
irreducible matrix $L$ given by $L(d,e)=0$ if $e=d$ and 
$L(d,e)=1$ when $e\neq d$, there is no transition matrix $\wP=(\wP(d,e): 
d,e\in \E)$ that satisfies $\wP(d,e)=0$ when $e=d$ and $\wpi_\E$ 
is a stationary 
%CAMBIAR une correction: on a change matrix par distribution 
distribution of $\wP$.

\medskip

\begin{remark}
\label{rem3}
When $I=\emptyset$, we have $\pi(\E)=1$ so $\wP=P$ and  
the unique restriction on the matrix 
$P_{\E\times \E}$ is given by (\ref{Eq9.11}). 
The maximum entropy matrix is given by the Markov chain defined by Parry
distribution \cite{dgs}, let us describe it. 
Let $\varphi$ and $\nu$ be the right
and left Perron-Frobenius eigenvectors associated to $L$ with eigenvalue
$\lambda$ and normalized by $\sum_{d\in \E} \nu(d)\varphi(d)=1$.
The stochastic matrix $\wP(d,e)=\lambda^{-1}\varphi(e)/\varphi(d)$
has stationary distribution $(\nu(d)\varphi(d): d\in \E)$. Its   
entropy is $\log \lambda$, which is the topological entropy.
It is known to maximize the entropy of all the Markov chains 
whose transition matrices $\wP$satisfy $(H3)$. 
(But also of all stationary distributions 
of the topological Markov shift defined by $L$, see \cite{dgs} Chapter 17). 
$\Box$
\end{remark}

\medskip

\subsection{Associated linear problem}
\label{Secal}
\noindent The hypothesis $(H3)$ and constraints (\ref{Eq9.11}), 
(\ref{Eq9.21}), can be put in the 
%On met la ligen suivante pour souligner que la matrice est forcement nonegative
following form for the matrix $\wP\ge 0$,
\begin{eqnarray}
\label{Eq9.1}
&{}&\forall d\in \E: \; \sum_{e\in \E} \wP(d,e)=1 \,;\\
\label{Eq9.2}
&{}& \forall e\in \E: \; \sum_{d\in \E} \wpi(d) \wP(d,e)=\wpi(e)\,;\\
\label{Eq9.3}
&{}& \forall d,e\in \E: \; L(d,e)=0\Rightarrow \wP(d,e)=0.
\end{eqnarray}
Let us put the whole problem, in particular these constraints, 
in a vector form. 

\medskip

\noindent Let $\ell=|\E|$. We assume 
$\E=\{1,\dots ,\ell\}$ and so $\wP=\{\wP(i,j): i,j=1,\dots,\ell)$.
We define a column vector $\vp=(p(k); k=1,\cdots,\ell^2)$ 
representing the matrix $\wP$. We set
$$
p(i+(j-1)\ell)=\wP(i,j), i,j=1,\dots,\ell,
$$
so we group sequentially the columns of $\wP$, in fact 
$(p(i+(j-1)\ell): i=1,\cdots,\ell)$ is the $j-$the column of
$\wP$. Since $\wP\ge 0$ one has $\vp\ge 0$. We also write
$\wpi_\E=(\wpi(i): i=1,\dots,\ell)$. Consider the Kronecker function
$$
\delta(a,b)=
\begin{cases}
1 & \hbox{ if } a=b \,; \\
0 & \hbox{ if } a\neq b.
\end{cases}	
$$
Define the following three
matrices $A$, $B$, $C$, of dimension $\ell\times \ell^2$. 
For $r,s,t\in \{1,\dots, \ell\}$ set,
\begin{eqnarray*}
&{}& A(r,t+(s-1)\ell)=\delta(t,r); \\
&{}& B(r,t+(s-1)\ell)=\wpi(t)\delta(s,r); \\
&{}& C(r,t+(s-1)\ell)=(1-L(t,r))\delta(s,r).
\end{eqnarray*}
We have 
$$
\sum_{t=1}^\ell \sum_{s=1}^\ell A(r,t+(s-1)\ell)p(t+(s-1)\ell)=
\sum_{s=1}^\ell p(r+(s-1)\ell)=\sum_{s=1}^\ell \wP(r,s).
$$
Then, the restriction $\wP{\1}_\E={\1}_\E$ (\ref{Eq9.1}),  
is equivalent to $A \vp={\1}$ with ${\1}$ the unit vector of size $\ell$.
On the other hand
$$
\sum_{t=1}^\ell \sum_{s=1}^\ell B(r,t+(s-1)\ell)p(t+(s-1)\ell)=
\sum_{t=1}^\ell \wpi(t) p(t+(r-1)\ell)=\sum_{t=1}^\ell 
 \wpi(t) \wP(t,r).
$$
Then,  the restriction $\wpi^t \wP=\wpi^t$ (\ref{Eq9.2}) is equivalent to 
$B \vp=\wpi$. Finally, since $L(\cdot,\cdot)$ is $0-1$ valued
we get,
\begin{eqnarray*}
&{}& \sum_{t=1}^\ell \sum_{s=1}^\ell C(r,t+(s-1)\ell)p(t+(s-1)\ell)
= \sum_{t=1}^\ell (1-L(t,r)) p(t+(r-1)\ell)\\
&{}& =\sum_{t=1}^\ell
(1-L(t,r)) \wP(t,r)=\sum_{t\in \{1,..,\ell\}: L(t,r)=0} \wP(t,r).
\end{eqnarray*}
Since the restriction (\ref{Eq9.3}) 
is equivalent to $L(t,r)=0$ implies $\wP(t,r)=0$ for all $t,r$, then 
it is equivalent to  $C \vp={\bf 0}$, the zero vector of size $\ell$. 

\medskip

\noindent We have proven that there exists 
$\wP\ge 0$ such that the conditions (\ref{Eq9.1}), (\ref{Eq9.2}), 
(\ref{Eq9.3}), are fulfilled if and only if we have,
$$
D\vp=\vb, \; \vp\ge 0,
$$ 
where 
$$
D=(D(r,t+(s-1)\ell): r=1,..,3\ell, t,s=1,..,\ell)
$$ 
is a $3\ell\times \ell^2$ matrix whose coefficients are given by
$$
D(r,t+(s-1)\ell)=
\begin{cases}
A(r,t+(s-1)\ell) & \hbox{ if } 1\le r\le \ell,\\
B(r-\ell,t+(s-1)\ell) & \hbox{ if } \ell+1 \le r \le 2\ell,\\
C(r-2\ell,t+(s-1)\ell) & \hbox{ if } 2\ell+1 \le r \le 3\ell\,;
\end{cases}
$$
and $\vb$ is a $3 \ell-$ dimensional vector given by
$$
b(r)=
\begin{cases}
1 & \hbox{ if } 1\le r\le \ell,\\
\wpi(r-\ell) & \hbox{ if } \ell+1 \le r \le 2\ell,\\
0  & \hbox{ if } 2\ell+1 \le r \le 3\ell\,.
\end{cases}
$$
By Farkas' Lemma (see Proposition 3.2.1 p. 170 in \cite{bno}) 
there exists $\vp\ge 0$ satisfying $D \vp=\vb$ 
or there exists some $y\in \RR^{3\ell}$ satisfying
$D^t y\ge 0$ and $b^t y<0$. Let us set $y^t=(u^t,v^t,w^t)$ with 
$u,v,w$ be $\ell-$dimensional vector. Then we have, 
$$
b^t y=\sum_{r=1}^\ell \big(u(r)+v(r)\wpi(r)\big)
$$
and so the condition $b^t y<0$ is equivalent to
\begin{equation}
\label{rest1}
\sum_{r=1}^\ell (u(r)+ \wpi(r) v(r))<0.
\end{equation}
We have that,
$$
(y^t D) (t+(s-1)\ell)=u(t)+v(s)\wpi(t)+w(s)(1-L(t,s)). 
$$
Then, the condition $D^t y\ge 0$ is equivalent to
\begin{equation}
\label{rest2}
\forall t,s=1,\dots,\ell: \quad u(t)+v(s)\wpi(t)+w(s)(1-L(t,s))\ge 0.
\end{equation}

\medskip

\begin{proposition}
\label{propp1}
The conditions (\ref{rest1}) and (\ref{rest2}) are 
equivalent to (\ref{rest1}) and (\ref{rest3}), with
\begin{equation}
\label{rest3}
\forall t,s \hbox{ such that } L(t,s)=1: \quad u(t)+v(s)\wpi(t)\ge 0.
\end{equation}
\end{proposition}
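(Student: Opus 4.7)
My plan is to show that feasibility of the system (\ref{rest1}) + (\ref{rest2}) in $(u,v,w)\in\RR^{3\ell}$ is equivalent to feasibility of (\ref{rest1}) + (\ref{rest3}) in $(u,v)\in\RR^{2\ell}$. Since (\ref{rest1}) does not involve $w$ at all, it is preserved under projection, and the real content is about eliminating $w$ from (\ref{rest2}).

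The forward direction is immediate: if $(u,v,w)$ satisfies (\ref{rest2}), then restricting to pairs $(t,s)$ with $L(t,s)=1$ kills the term $w(s)(1-L(t,s))$ and yields exactly (\ref{rest3}).

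For the converse, given $(u,v)$ satisfying (\ref{rest1}) and (\ref{rest3}), I would construct $w$ explicitly. The key observation is that in (\ref{rest2}), $w(s)$ enters only through the factor $1-L(t,s)$, which is $0$ when $L(t,s)=1$ and $1$ otherwise. Hence $w(s)$ is free to be made as large as needed to absorb the ``forbidden'' pairs, whereas the ``allowed'' pairs are controlled entirely by (\ref{rest3}). Concretely, for each $s$, set
$$
w(s) \;=\; \max\Bigl(0,\; \max_{t\,:\,L(t,s)=0}\bigl(-u(t)-v(s)\wpi(t)\bigr)\Bigr),
$$
with the convention that the inner maximum is $0$ if no such $t$ exists. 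Then for $(t,s)$ with $L(t,s)=1$, inequality (\ref{rest2}) reduces to (\ref{rest3}); for $(t,s)$ with $L(t,s)=0$, it becomes $u(t)+v(s)\wpi(t)+w(s)\ge 0$, which holds by construction.

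I do not anticipate any real obstacle: the proposition is essentially a projection statement saying that the slack variable $w$ can always be chosen to accommodate the forbidden transitions, so the effective content of the Farkas alternative lives in the simpler system (\ref{rest1}) + (\ref{rest3}).
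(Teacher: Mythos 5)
Your proposal is correct and follows essentially the same route as the paper: the paper likewise observes that $w$ is a free slack variable and completes the converse by choosing $w(s)\ge \max\{|u(t)|+|v(s)|\wpi(t): t=1,\dots,\ell\}$, which plays the same role as your explicit maximum over the forbidden pairs. Your write-up additionally spells out the (trivial) forward direction, which the paper leaves implicit.
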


\begin{proof}
If (\ref{rest3}) is fulfilled then the choice
$$
w(s)\ge max\{|u(t)|+|v(s)|\wpi(t): t=1,\dots,\ell\}, s=1,\dots,\ell,
$$
implies that  (\ref{rest2}) is satisfied and (\ref{rest1}) is not modified.
\end{proof}

\begin{proposition}
\label{propp2}
If $L(i,i)=1$ for all $i=1,\dots,\ell$ then there always exists a 
%On elimine transition
matrix $\wP\ge 0$ satisfying (\ref{Eq9.1}), (\ref{Eq9.2}) and (\ref{Eq9.3}).
\end{proposition}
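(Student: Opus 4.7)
The plan is to use the Farkas' lemma dichotomy set up immediately before the proposition, together with the reduction in Proposition \ref{propp1}. By Farkas' lemma, either a feasible $\wP\ge 0$ satisfying (\ref{Eq9.1})--(\ref{Eq9.3}) exists, or there exist $u,v,w\in\RR^\ell$ satisfying simultaneously (\ref{rest1}) and (\ref{rest2}). Proposition \ref{propp1} lets me drop $w$: equivalently, infeasibility is characterized by the existence of $u,v$ satisfying (\ref{rest1}) and (\ref{rest3}).

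The key observation is that the diagonal hypothesis $L(i,i)=1$ interacts directly with (\ref{rest3}). Under this hypothesis I can specialize (\ref{rest3}) to $t=s=r$ for every $r\in\{1,\dots,\ell\}$, obtaining
$$
u(r)+\wpi(r)v(r)\ge 0\quad\text{for all } r.
$$
Summing over $r$ yields $\sum_{r=1}^\ell\bigl(u(r)+\wpi(r)v(r)\bigr)\ge 0$, which directly contradicts (\ref{rest1}). Therefore the alternative in Farkas' lemma cannot occur, so a nonnegative $\wP$ satisfying the three constraints must exist.

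I do not expect a real obstacle here: the whole point of the preceding linearization and Proposition \ref{propp1} is to reduce feasibility to an easily checkable condition, and the diagonal entries pick out exactly the constraints one needs. As a sanity check I would also note the explicit construction $\wP=I_\ell$: it is nonnegative, row-stochastic, leaves $\wpi$ invariant, and is supported on the diagonal (allowed since $L(i,i)=1$), so it gives a direct witness of feasibility independently of Farkas' lemma. I would present the Farkas argument as the main proof to match the formalism just developed, and mention the identity matrix only as a remark confirming the conclusion.
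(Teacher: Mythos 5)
Your proof is correct and takes essentially the same route as the paper: specialize the Farkas alternative to the diagonal entries, where $L(i,i)=1$ makes the $w$-term vanish (the paper works directly with (\ref{rest2}) rather than via Proposition \ref{propp1}, an immaterial difference), and sum to contradict (\ref{rest1}). Your side remark that the identity matrix $\wP=I_\ell$ is itself a feasible witness is a valid and even simpler observation, though not the argument the paper gives.
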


\begin{proof}
From the condition $L(i,i)=1$ and (\ref{rest2}) we get 
$u(i)+v(i)\wpi(i)\ge 0$ for all 
$i=1,\dots,\ell$. Then, (\ref{rest1}) cannot be satisfied. Hence, 
there exists $\vp\ge 0$ satisfying $D \vp=\vb$.
\end{proof}

%On ajoute la phrase qui suit en relation au theoreme
\noindent Hence, the last part of Theorem \ref{maintheorem} is proven.

\subsection{Non-connected labyrinths}
\label{SecNTL}
\noindent When there is a set of disjoint labyrinths that are only 
connected through the states in $I$ the analysis is analogous. 
Let us briefly sketch it. As before, $X$ is an irreducible stationary
Markov chain,  
with values in the set $I\sqcup \E$ with $\E=\bigsqcup_{m\in M}\E_m$
(disjoint union).
 The transitions among the labyrinths 
satisfy $P_{\E_m\times \E_{m'}}=0$ when $m\neq m'$. Since the chain is irreducible, 
$\E_m$ and $\E_{m'}$ are only connected through $I$.
So, for all $m\in M$ there exists some state $i_m\in I$
such that $P(i_m,\E_m)>0$.

\medskip

\noindent Let $\pi_I=(\pi(i): i\in I)$ 
and $\pi_{\E_m}=(\pi(d):d\in \E_m)$ denote 
the restrictions of $\pi$ to $I$ and $\E_m$ respectively, for $m\in M$.
We  assume 
that the transition submatrices $P_{I\times I}$, $P_{I\times \E}$,
$P_{\E\times I}$ and the vector $\pi_I$ are known. 

\medskip

\noindent The assumption $(H1)$ now reads:
$P_{\E_m\times I}={\1}_{\E_m}\pi^t_I$ for all $m\in M$, so
from all the labyrinths the law of the outbreak on 
$I$ has distribution $\pi^t_I$. 

\medskip

\noindent Equation (\ref{Eq1'}), $\pi_I^t P_{I\times I}=\pi(I) \pi^t$ 
is also deduced from $(H1)$.  From $(H1)$ we also get,
$$
{\1}_{\E_m}=P_{\E_m\times I} {\1}_{I}
+\sum_{m'\neq m}P_{\E_{m}\times \E_{m'}}{\1}_{\E_{m'}}
+P_{\E_{m}\times \E_{m}}{\1}_{\E_{m}}
=\pi(I){\1}_{\E_m}+P_{\E_m\times \E_m}{\1}_{\E_m}.
$$
Since $\pi(\E)=1-\pi(I)$,  (\ref{Eq1}) becomes,
$P_{\E_m\times \E_m}{\1}_{\E_m}=\pi(\E) {\1}_{\E_m}$. 
We assume that  
$\pi(\E_m)^{-1}\pi_{\E_m}$ is a stationary distribution for 
$\pi(\E)^{-1}P_{\E\times \E}$. So $(H2)$ becomes
$\pi(\E) \pi_{\E_m}^t=\pi_{\E_m}^t P_{\E_m\times \E_m}$ for all $m\in M$.

\medskip

\noindent The same arguments show that (\ref{Eq3}) can be  written 
$\pi_{\E_m}=\pi(I)^{-1} \pi_I^t P_{I\times \E_m}$ for all $m\in M$.
Hence, all the vectors $\pi_{\E_m}$ are known. 
The entropy of the stationary Markov chain $X$ is,
\begin{eqnarray*}
\nonumber
h(X)&=&-\sum_{i\in I} \pi(i) \sum_{a\in I\sqcup \E} P(i,a) \log  P(i,a)
-\sum_{d\in \E} \pi(d) \sum_{j\in I}\pi(j)\log \pi(j)\\
&{}&\; -\sum_{d\in \E} \pi(d)\sum_{e\in \E} P(d,e) \log  P(d,e).
\end{eqnarray*}
Since all $\pi_{\E_m}=\pi(I)^{-1} \pi_I^t P_{I\times \E_m}$ are known, 
the maximization of $h(X)$ is equivalent to maximize 
$H'(P_{\E\times \E})=-\sum_{d\in \E} \pi(d) \sum_{e\in \E} P(d,e) \log  
P(d,e)$. 
Since $P_{\E_m\times \E_{m'}}=0$ when $m\neq m'$,
$$
H'(P_{\E\times \E})=
-\sum_{m\in M}\left(\sum_{d\in \E_m} \pi(d) \sum_{e\in \E_m} 
P(d,e) \log  P(d,e)\right).
$$ 
So, it is equivalent to
maximize $H'(P_{\E_m\times \E_m})$ with $P_{\E_m\times \E_m}$
subject to $P_{\E_m\times \E_m}{\1}_{\E_m}\!=\!\pi(\E) {\1}_{\E_m}$ and 
$\pi_{\E_m}\!=\!\pi(I)^{-1} \pi_I^t P_{I\times \E_m}$ for $m\in M$. 
Then, the analysis is the same as before, but for each one of the matrices 
$P_{\E_m\times \E_m}$.

\medskip

%NOUVEAU: dans la ligne en bas le nom de la section s'a change
\section{Restart and quasi-stationarity}
\label{qsd}
%Nouveaus: Dans la ligne en bas l'ancien nom se la section est mis pour la sous-section
\subsection{Quasi-stationarity}
We can use the quasi-stationary theory, see \cite{cmsm}, to 
illustrate how the chain $X$ restarts
its evolution when it exits from $I$ or from $\E$.

\medskip

%NOUVEAU: dans la ligne qui suit on dit qu'on part de $I$
%temps de sejour 
\noindent When starting from $X_0\in I$, let $\tau_\E=\inf\{n>0: X_n\in \E\}$ be the hitting time of $\E$.
By iterating (\ref{Eq1'}) 
we get $\wpi_I^t P^n_{I\times I}=\pi(I)^n \wpi_I^t$,
and so $\PP_\wpi(\tau_\E>n)= \pi(I)^n$, Then, 
$$
\PP_{\wpi_I}(X_n=j \, | \, \tau_\E>n)=\wpi_I(j), j\in I, n\ge 0,
$$
that is $\wpi_I$ is a quasi-stationary distribution (q.s.d.) 
of the Markov chain $X^{(I)}=(X_n: n<\tau_\E)$ with states 
in $I$ and killed at $\tau_\E$. So, When
%NOUVEAU: ligne qui suit on met hitting time
starting from $\wpi_I$ the hitting time of $\E$ is geometrically distributed,
%NOUVEAU: abajo se cambia tau_I por tau_\E qui est correcte
$\tau_\E\sim $Geometric$(\pi(\E))$ (firstly shown in  \cite{fmp} for q.s.d.'s).

\medskip 

\noindent Starting with $\wpi_I$ the exit distribution from $I$ is
given by
\begin{eqnarray*}
\forall d\!\in \!\E\!:&{}&
\PP_{\wpi_I}(X_{\tau_\E}\!=\!d)\!=\! 
\sum_{n\ge 1}\PP(X_{n}=d , \tau_\E\!=\!n)
\!=\!\sum_{n\ge 1}\PP_{\wpi_I}(X_n=d, \tau_\E\!>\!n\!-\!1)\\
&=&\sum_{n\ge 1}  \PP(X_n=d | \tau_\E\!>\!n-1)\PP_{\wpi_I}(\tau_\E\!>\!n\!-\!1)
\!=\!\sum_{n\ge 1}  (\sum_{i\in I}\wpi(i) P(i,d))\pi(I)^{n-1}.
\end{eqnarray*}
From (\ref{Eq3}) we get $\pi(d)\!=\!\sum_{i\in I}\wpi(i) P(i,d)$, and so  
$\PP_{\wpi_I}(X_{\E}\!=\!d)\!=\!\pi(d)(1\!-\!\pi(I))^{-1}$. Then, the exit law from $I$ is $\wpi_\E$:
\begin{equation}
\label{Eqex1}
\forall d\in \E: \quad \PP_{\wpi_I}(X_{\tau_\E}=d)=\wpi(d).
\end{equation}
Similar computations give that when starting from $\wpi_I$, $X_{\tau_\E}$ and $\tau_\E$ are independent, 
$$
\forall d\in \E, n\ge 1: \quad \PP_{\wpi_I}(X_{\tau_\E}=d,\tau_\E=n)=\wpi(d)\PP_{\wpi_I}(\tau_\E=n).
$$

\noindent Similarly, let
$\tau_I=\inf\{n>0: X_n\in I\}$ be the hitting time of $I$. Then, the hypothesis $(H2)$ 
implies that $\wpi_\E$ is a q.s.d. of the Markov
chain $X^{(\E)}=(X_n: n<\tau_I)$ with states in $\E$ killed at 
$\tau_I$. So, $\PP_{\wpi_\E}(X_n=d \, | \, \tau_I>n)=\wpi_\E(d), d\in \E, n\ge 0$,
%NUEVO: en bas c'est bien \pi(I), pas pi(\E) comme demande le referee 2
and $\tau_I\sim $Geometric$(\pi(I))$.
Let us see that when starting with $\wpi_\E$, the exit distribution from $\E$ is $\wpi_I$. For every $i\in I$
we have,
$$
\PP_{\wpi_\E}(X_{\tau_I}\!=\! i)\!=\!\!
\sum_{n\ge 1}\PP_{\wpi_\E}(X_{n}=i | \tau_I\!>\!n\!-\!1)\PP_{\wpi_\E}(\tau_I\!>\!n\!-\!1)
%NOUVEAU: J'ai mis le n-1 du dernier commentaire referee 2, que tu avait deja corrige dans une version precedante
%En bas on corrige version originale lasomme est sur e\in \E (pas sur i\in I) et on ne change pas, comme
%avait suggere le referee 2, \wpi(e) par \wpi_\E(e) parce que c'est la meme chose
\!\!=\!\sum_{n\ge 1}\! \pi(\E)^{n-1} (\sum_{e\in \E}\!\wpi(e) P(e,i)).
$$
From $(H1)$, $P(e,i)=\pi(i)$ and so $\PP_{\wpi_\E}(X_{\tau_I}\!=\! i)\!=\! \pi(i)(1-\pi(\E))^{-1}$ so, 
\begin{equation}
\label{Eqex2}
\forall i\in I: \quad \PP_{\wpi_\E}(X_{\tau_I}=i)=\pi(I)^{-1}\pi(i)=\wpi(i).
\end{equation}
Also $X_{\tau_I}$ and $\tau_I$ are independent  when starting from $\wpi_\E$,
$$
\forall i\in I, n\ge 1: \quad \PP_{\wpi_\E}(X_{\tau_I}=i,\tau_I=n)=\wpi(i)\PP_{\wpi_\E}(\tau_I=n).
$$
Since $\PP_d(X_1\in \E)=\pi(\E)$ for all $d\in \E$ (see (\ref{Eq1})),
$(X_0,..,X_k, \tau_I>k)$ is independent of $\tau_I>n$ for all $n>k$  
because $\PP(\tau_I>n | X_0,..,X_k)=\pi(\E)^{n-k}$. 

\medskip

%NOUVEAU: Les processus $Z$ and $Z^n(d)$ ont ete translates

%NOUVEAU Dans ce qui suit le titre et les ligne qui suit sont nouveaux
\subsection{A construction of the chain}
We will reconstruct a copy of the stationary chain $X=(X_n: n\in \ZZ)$. 
starting from the transitions among the observable states.

\medskip

%NOUVEAU: Les equations qui suivent sont nouvelles, c'est pour la stationarite

\noindent  Let $\mS^X=\{n\in \ZZ: X_n\in I\}$. We order these elements, the first nonnegative one is $S^X_0$, so 
$\mS^*=\{S^X_n: n\in \ZZ\}$.  Since $X$ is stationary 
%CAMBIAR On elimine Markov chain 
we have, 
\begin{eqnarray}
\label{eqnxid0}
&{}& \forall n\in  \ZZ\setminus \{0\}: S^X_{n+1}-S^X_n\sim  \hbox{Geometric} (\pi(I));\\
\label{eqnxid1}
&{}& S^X_0=\inf\{n\ge 0: X_n\in I\}\sim \hbox{Geometric} (\pi(I))-1;\\
\label{eqnxid2}
&{}&-S^X_{-1}=-\inf\{n< 0: X_n\in I\}\sim  \hbox{Geometric} (\pi(I)). 
\end{eqnarray} 
That is, $\PP(S^X_0=k)=\pi(I)\pi(\E)^n$ for $k\ge 0$. 
The conditions (\ref{eqnxid0}),  (\ref{eqnxid1}), and (\ref{eqnxid2}), are those of 
a stationary renewal sequence in $\ZZ$. They guarantee that,
\begin{equation}
\label{eqnxid3}
\forall t\in \ZZ: \PP(t\in \mS^X)=\pi(I).
\end{equation}
%CAMBIAR ON met
Notice that $1/\pi(I)$ is the mean of a Geometric$(\pi(I))$.
%CAMBIAR On elimine Notice that stationarity implies that 
From (\ref{Eq1'}) one gets $X_{S^X_n}\sim \wpi_I$.

\medskip

\noindent For the construction, let us consider the matrix $Q=(Q(i,j): i,j\in I)$ given by
$Q(i,j)=P(i,j)+P(i,\E)\wpi(j)$, namely
$$
Q=P_{I\times I}+P_{I\times \E}{\1}_{\E}\wpi_I^t.
$$
This matrix  is stochastic and one can check that $\wpi_I$ is its stationary distribution.  
Let $Y=(Y_n: n\in \ZZ)$ be a Markov chain with transition matrix $Q$ and stationary 
distribution $\wpi_I$. Then $Y$ gives the trajectories of $(X_{S^X_n}: n\in \ZZ)$.  
In fact, a transition from $i$ to $j$ can be made directly 
with a jump in $P$, plus an entrance to the labyrinth $\E$ and then reemerging from $\E$ to $I$. This
uses (\ref{Eqex1}) and (\ref{Eqex2}) (the matrix $Q$ was introduced in \cite{fkmp}). 
%RECENT: On elimine la referemce a Q^* parce que on l'utilise pas
%The evolution of the reversed chain 
%$(Y_{-n}: n\in \ZZ)$ is given by the transition matrix $Q^*=(Q^*(i,j)=Q(j,i)\wpi(j)/\wpi(i): i,j\in I)$.

\medskip

%NOUVEAU: La ligne qui suit est nouvelle
\noindent Let us now consider the transitions from $I$ to $\E$, that is the first transition to the labyrinth. 
Let 
$\U=(U_n(i): i\in I, n\in \ZZ)$ be an array of independent random variables taking values 
in $\E$ with 
\begin{equation}
\label{eqc0}
\forall d\in \E, n\in \ZZ: \PP(U_n(i)=d)=P(i,d)/P(i,\E). 
\end{equation}
In particular $\PP(U_n(i)\in \E)=1$. 

\medskip

% NOUVEAU: les processus $Z$ translates
\noindent Let $Z=(Z_k: 0\le k<\tau)$ be a Markov chain starting from $\wpi_\E$, evolving with $P$ and 
killed when attaining $I$, so $\tau=\tau_I\sim$Geometric$(\pi(I))$ and $Z\sim X^{(\E)}$. Consider 
$(Z^n: n\in \ZZ)$ be a sequence of i.i.d. copies of $Z$ and denote by $\tau^n$ their killing times. So,
%NOUVBEAU: On ajoute l'equation qui etait plus bas
%CAMBIAR  On a change le $$ $$
$\forall n\in \ZZ: \; \tau^n\sim \hbox{ Geometric}(\pi(I))$.
%CAMBIAR: On a supprime This last relation follows from (\ref{Eq1}). 
%NOUVEAU: On translate et on modifie legerment les lignes qui suivent
%CAMBIAR On met \medskip 

\medskip

\noindent Define an array $(Z^n(d): d\in \E, n\in \ZZ)$ of independent chains, such that 
$Z^n(d)$ is the chain $Z^n$ conditioned to $Z^n_0=d$. Hence $Z^n_0(d)=d$, it evolves with kernel $P$ 
and its killing time is $\tau^n(d)=\tau^n$. 
%NOUVEAU: on elimine la lignes qui suit
%We note by $(\tau^n(d):  d\in \E, n\in \ZZ_+)$ the family of killing times of these chains. They 
%are are i.i.d. with 
%NOUVEAU: Supprime la ligne en bas
%We write $Z^n(d)=(Z^n_t(d): 0\le t<\tau^n(d))$.
%CAMBIAR On met  \medskip

\medskip

\noindent To construct a copy 
% NOUVEAU: On elimine $\mfX$ 
$W=(W_t: t\in \ZZ)$ of $X$ we cut the trajectory 
%CAMBIAR On change le Y^+ par Y
$Y$ at some random places 
and insert copies of the killed trajectories on $\E$. This will define a process $W$ 
with states in $I\cup \E$ with the same conditional probabilities as $X$. 
%NOUVEAU: La phrase quit suit est nouvelle parce que le processus va etre stationnaire de le depart
The initial distribution of $W$ will satisfy $W_0\sim \pi$,  
then the processes $W$ and $X$ will be equally distributed.

%NOUVEAU: On elimine les lignes qui suivent parce que la stationnarite est de le depart
%After we randomize the initial state  in order that the shifted 
%process is stationary, so it will start with initial distribution $\pi$, and the resulting process $\mfX$ 
%is a copy of $X$. 

\medskip

\noindent Let $\G=(G_n(i,j): i\in I, j\in I, n\ge 0)$ be an array of independent Bernoulli random variables with
$G_n(i,j)\sim G(i,j)$ for all $n\ge 0$ and such that $\PP(G(i,j)=1)=\theta(i,j)$, $\PP(G(i,j)=0)=1-\theta(i,j)$,
with
\begin{equation}
\label{eqtheta}
\theta(i,j)\!=\!\frac{P(i,j)}{Q(i,j)}, \;\; (1-\theta(i,j))=\frac{P(i,\E)\wpi(j)}{Q(i,j)}
\end{equation}
Then, $Q(i,j)\theta(i,j)=P(i,j)$ and
$$
\sum_{j\in I}Q(i,j)(1-\theta(i,j))\frac{P(i,d)}{P(i,\E)}=\sum_{j\in I}\wpi(j)P(i,d)=\pi(d),
$$
where the last equality follows from (\ref{Eq3}).
The array $\G$ and a part of the construction that follows was already considered in \cite{sm}.

\medskip

\noindent We define recursively a  process $(W_t: t\in \ZZ)$ 
and a sequence of random times $(S_n: n\in \ZZ)$ which will be  the times of presence of the process 
%CAMBIAR on change le W^+ par W
$W$
in $I$ and the random variables  $\G$ will help us to describe the switching of $W$ from $I$ to $\E$.

\medskip

\noindent Firstly, let $Z'=(Z'_k: 0\le k<\tau')$ be a copy of $Z$, independent of $(Z^n: n\ge 0)$ with
killing time $\tau'\sim \tau_I$.  We define,
$$
W_t=Z'_{t+1} \hbox{ for }0\le t<\tau'-1, \;W_{\tau'-1}\sim \wpi_I \hbox{ and }S_0=\tau'-1.
$$
Therefore $S_0\sim$ Geometric$(\pi(I))-1$
%CAMBIAR pour rendre plus leger on supprime On the other hand 
and we have 
$S_0=0$ if and only if $\tau'=1$. So, $\PP(S_0=0)=\PP_{\wpi_\E}(\tau_I=1)=\pi(I)$. 
On the other hand we have, when $S_0=0$ then $W_0\sim \pi_I$ and when $S_0>0$ 
we have $W_0=Z'_1\sim \wpi_\E$.
We conclude that
$$
W_0\sim \pi(I)\wpi_i+\pi(\E)\wpi_\E=\pi.
$$
which is the initial distribution of $X$. The variable $Z'_0\sim \wpi_\E$ is fixed later.

\medskip

\noindent $ \bullet$ Let $n\ge 0$ and assume 
we have defined $(S_l: 0\le l\le n)$, then we continue with step $n+1$. 

\smallskip

\noindent If $G_n(Y_n,Y_{n+1})=1$ then we put $S_{n+1}=S_n+1$, $W_{S_{n+1}}=Y_{n+1}$ and we 
continue with step $n+2$.  

\smallskip

\noindent If $G_n(Y_n,Y_{n+1})=0$ then we put $S_{n+1}=S_n+\tau^n$,  
$W_{S_n+t}=Z^n_{t-1}(U_n(Y_n))$ for $1\le t\le \tau^n-1$ and 
$W_{S_{n+1}}=Y_{n+1}$. In particular 
$W_{S_n+1}=Z^n_0(U_n(Y_n))=U_n(Y_n)$. We continue with step $n+2$.

\smallskip

%RECENT suivant tes suggerence
\noindent This construction for $n=0$ is visualized in Figures $1$ and $2$. 

\medskip

\begin{figure}
\centering
\includegraphics[width=\textwidth]{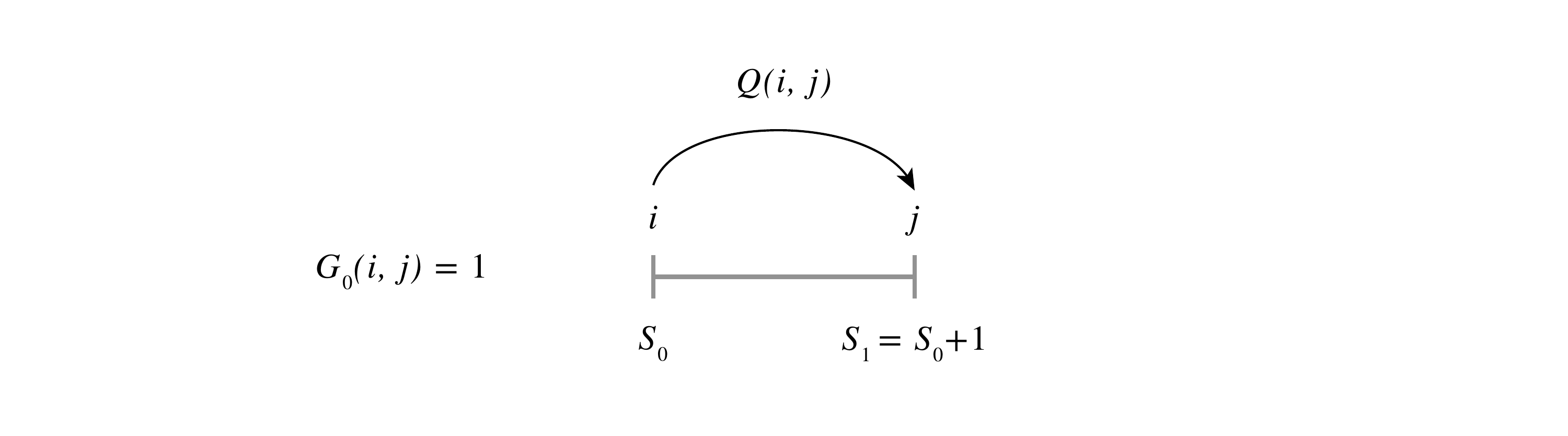}
\caption{$G_{0}(i,j)=1$}
\label{fig5_03}
\end{figure}

\begin{figure}
\centering
\includegraphics[width=\textwidth]{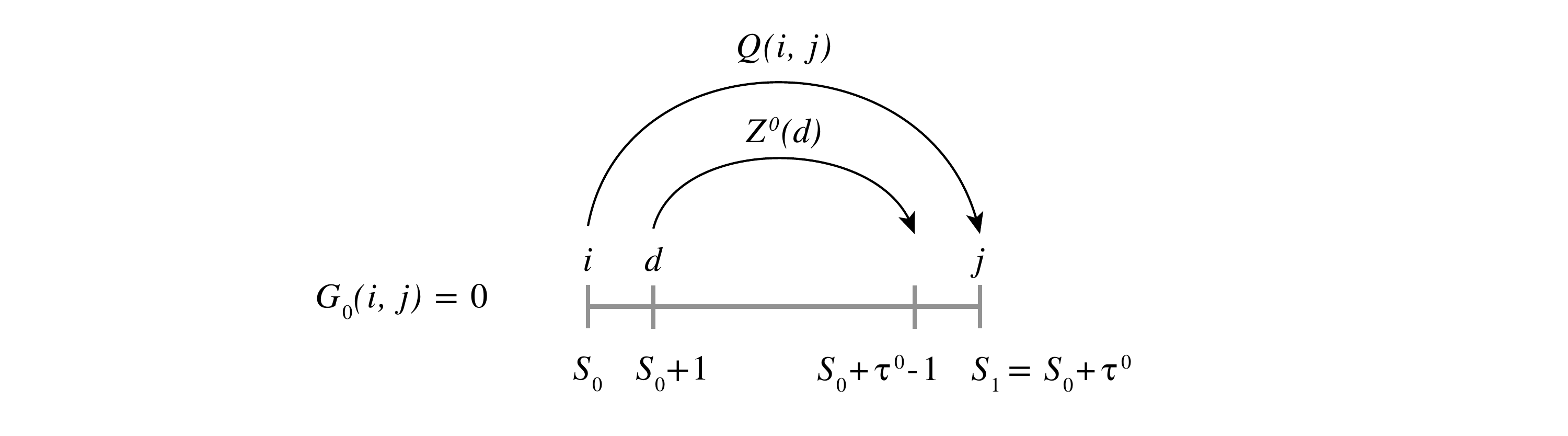}
\caption{$G_{0}(i,j)=0$}
\label{fig6_04}
\end{figure}

\noindent We define $S_{-1}=-\tau^{-1}$ and $W_{S_{-1}}=Y_{-1}$. 
If $S_{-1}=-1$ we take $Z'_0\sim \wpi_\E$
and $Z'$ is chosen independent of the sequence $(Z^n: n\in \ZZ)$.
If $-S_{-1}
%CAMBIAR en haut on a ajoute le -, on elimine =-\tau^0 parce que c'estait une erreur, on ajpute  ) et U_n 
% est change par U_{-1}
\ge 2$,  we take $W_{S_{-1}+t}=Z^{-1}_{t-1}(U_{-1}(Y_{-1}))$ for $1\le t\le \tau^{-1}-1$.
In this case,
%CAMBIAR: on change la fin de la phrase et on le met ici
in the definition of $S_0$ 
we set $Z'_0=
%CAMBIAR On corrige index 
Z^{-1}_{\tau^{-1}-2}(U_{-1}(Y_{-1}))$.
%CAMBIAR on elimine when $S_0$ was defined. parece on le met plus haut

\medskip

\noindent $\bullet$ Let $n\le -1$ and assume we have already defined 
$(S_l: n\le l\le -1)$, then we continue with step $n-1$. 

\smallskip

\noindent If $G_{n-1}(Y_{n-1},Y_{n})=1$ then we put $S_{n-1}=S_n-1$, $W_{S_{n-1}}=Y_{n}$ and we 
continue with step $n-2$.  

\smallskip

\noindent If $G_{n-1}(Y_{n-1},Y_n)=0$ then we put $S_{n-1}=S_n-\tau^{n-1}$,  
$W_{S_{n-1}+t}=Z^n_{t-1}(U_{n-1}(Y_{n-1}))$ for $1\le t\le \tau^{n-1}-1$ and
$W_{S_{n-1}}=Y_{n-1}$. In particular
$W_{S_{n-1}+1}
%CAMBIAR eut eliminer =Z^{n-1}_0(U_{n-1}(Y_{n-1}))
=U_{n-1}(Y_{n-1})$. We continue with step $n-2$.

\medskip

\noindent  Hence, we have defined $(W_t: t\in  \ZZ)$ and $\mS=\{S_n: n\in \ZZ\}$. Note that
\begin{equation}
\label{eqinfty} 
\PP(\lim_{n\to \infty} S_n=\infty, \;  \lim_{n\to -\infty} S_n=-\infty)=1.
\end{equation}

\medskip

\begin{theorem}
\label{thmreg}
The  processes $W$ and $X$ have the same law.
\end{theorem}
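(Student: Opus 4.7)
The plan is to show that $W$ is a Markov chain on $I\sqcup \E$ with transition matrix $P$ and one-dimensional marginal $\pi$; since the law of a stationary Markov chain on $\ZZ$ with irreducible kernel is uniquely determined by these two data, $W$ will then have the same law as $X$. The distribution $W_0\sim\pi$ has already been verified just after the definition of $W$, so the task is to establish the Markov property with the correct transition kernel. Because $Y$, $\G$, $\U$ and $(Z^n)$ are mutually independent, and the block of $W$ between consecutive times $S_n$ and $S_{n+1}$ is built only from $(Y_n,Y_{n+1},G_n,U_n(Y_n),Z^n)$, a one-step check suffices.

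I would handle the four cases for $\PP(W_{t+1}=b\mid W_s,\ s\le t)$ separately. When $W_t=i\in I$ (so $t=S_n$ and $Y_n=i$): by (\ref{eqtheta}), $\PP(G_n=1,\,Y_{n+1}=j\mid Y_n=i)=Q(i,j)\theta(i,j)=P(i,j)$ yields the $I\to I$ transitions, while $\PP(G_n=0\mid Y_n=i)\cdot\PP(U_n(i)=d)=P(i,\E)\cdot P(i,d)/P(i,\E)=P(i,d)$ yields the $I\to \E$ transitions. When $W_t=d\in \E$ lies strictly inside an excursion, the fact that $Z^n$ evolves with kernel $P$ immediately gives $\PP(W_{t+1}=e\mid W_t=d)=P(d,e)$ for $e\in \E$.

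The main obstacle is the $\E\to I$ transition, since here the construction replaces the state at which $Z^n$ would be absorbed by the externally chosen $Y_{n+1}$. Two facts, both consequences of $(H1)$, must be invoked. First, the one-step absorption probability of $Z^n$ from $d$ equals $\sum_{i\in I}P(d,i)=\pi(I)$, matching the geometric tail of the excursion. Second, the true absorption law $P(d,\cdot)/\pi(I)$ equals $\wpi_I$, independent of $d$; and a short Bayes computation from (\ref{eqtheta}) gives $\PP(Y_{n+1}=i\mid Y_n=y,\,G_n(y,Y_{n+1})=0)=P(y,\E)\wpi(i)/P(y,\E)=\wpi(i)$, also independent of $y$. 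The two distributions coincide, so $\PP(W_{t+1}=i\mid W_t=d)=\pi(I)\wpi(i)=\pi(i)=P(d,i)$.

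With the four transitions verified, the Markov property follows from the block structure: conditioning on $(W_s:s\le t)$ amounts to conditioning on $W_t$ together with its position within the current block, and the independence of future inputs $(G_m,U_m,Z^m)_{m\ge n+1}$ from this past means the conditional law of $W_{t+1}$ depends only on $W_t$. The symmetric construction for $n\le -1$, together with (\ref{eqinfty}), guarantees that $W$ is well-defined on all of $\ZZ$ and enjoys the same property there. Combined with $W_0\sim\pi$, this gives equality in law between $W$ and $X$.
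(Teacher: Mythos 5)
Your overall plan is the same as the paper's: show that $W$ is Markov with kernel $P$ and the correct one-dimensional marginal, by a case analysis on the type of transition, using the independence of $Y$, $\G$, $\U$ and $(Z^n)$ and the block structure. Your $I\to I$, $I\to \E$ and $\E\to\E$ computations are exactly the paper's (your $I\to\E$ step is (\ref{eqc1})). The one place you genuinely diverge is the $\E\to I$ transition: the paper conditions back to the last visit to $I$, writes the joint law of the whole block and divides, which requires knowing $\PP(W_{m-k}=a_{-k})=\pi(a_{-k})$; you instead argue locally, using $(H1)$ twice — the one-step absorption probability of the excursion from any $d\in\E$ is $\pi(I)$, and the conditional law of $Y_{n+1}$ given $Y_n=y$ and $G_n(y,Y_{n+1})=0$, computed by Bayes from (\ref{eqtheta}), is $\wpi_I$ — so that $\PP(W_{t+1}=i\mid\hbox{past})=\pi(I)\wpi(i)=P(d,i)$. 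This memorylessness argument is correct (the conditioning event $\{G_n(y,Y_{n+1})=0\}$ involving $Y_{n+1}$ is exactly what your Bayes step handles) and it is arguably cleaner, since it makes explicit that the position inside the excursion is irrelevant.

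There is, however, a gap in your concluding reduction. The Markov property with kernel $P$ at every time together with $W_0\sim\pi$ determines the law of $(W_t:t\ge 0)$, but not of the two-sided process: the marginal $\mu_m$ at a negative time $m$ only has to satisfy $\mu_m P^{|m|}=\pi$, which does not force $\mu_m=\pi$ (if all rows of $P$ were equal, any $\mu_m$ would do). To invoke "the law of a stationary Markov chain is determined by its kernel and its marginal" you must first prove that $W$ is stationary, i.e. $W_m\sim\pi$ for every $m\in\ZZ$, and this is precisely the part of the paper's proof you skipped: the increments $S_{n+1}-S_n$ are Geometric$(\pi(I))$, $S_0\sim$ Geometric$(\pi(I))-1$ and $-S_{-1}\sim$ Geometric$(\pi(I))$, so $\mS$ is a stationary renewal sequence and $\PP(t\in\mS)=\pi(I)$ for all $t$ (see (\ref{eqnu1})); combined with $W_{S_n}\sim\wpi_I$ (see (\ref{eqsecond})) and the fact that the excursion blocks are copies of $Z$ started from the quasi-stationary law $\wpi_\E$ (see (\ref{eqa3})), this yields $W_t\sim \pi(I)\wpi_I+\pi(\E)\wpi_\E=\pi$ at every time $t$. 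Add this renewal/marginal step (or verify $W_{-m}\sim\pi$ directly from the backward construction) before concluding equality of the two laws on all of $\ZZ$.
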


\begin{proof}
From definition and since $W_{S_0}\sim \wpi_I$, we have $(W_{S_n}: n\in \ZZ)=(Y_n: n\in \ZZ)$. Then, 
\begin{equation}
\label{eqsecond}
\forall 
n\in \ZZ
%CAMBIAR on elimine_+
:  W_{S_n}\sim \wpi_I.
\end{equation}
Let us show that 
\begin{equation}
\forall n\in \ZZ\setminus \{0\}: S_{n+1}-S_n\sim   \hbox{Geometric} (\pi(I)).
\end{equation}  
From the construction, (\ref{eqsecond}) and (\ref{Eq1'}), we have,
$$
\PP(S_{n+1}\!-\!S_n=1)\!=\!\sum_{i\in I}\sum_{j\in I}\wpi(i)Q(i,j)\theta(i,j)\!=\!\sum_{j\in I}\sum_{i\in I}\wpi(j)P(i,j)\!=\!\sum_{i\in I}\pi(j)=\pi(I).
$$
From the construction we have for all $n\ge 2$,
\begin{eqnarray*}
\PP(S_{n+1}-S_n=n)&=&\sum_{i\in I}\sum_{d\in \E}\sum_{j\in I}\wpi(i)Q(i,j)(1-\theta(i,j))\frac{P(i,d)}{P(i,\E)}\pi(\E)^{n-2}\pi(I)\\
&=& \sum_{j\in I}\sum_{i\in I}\sum_{d\in \E}\wpi(i)\wpi(j)P(i,d)\pi(\E)^{n-2}\pi(I)\\
&=&(\sum_{j\in I}\wpi(j))(\sum_{d\in \E}\sum_{i\in I}\wpi(i)P(i,d))
%CAMBIAR on suppime un parenthese ) 
\pi(\E)^{n-2}\pi(I)\\
&=&\pi(I)\pi(\E)^{n-1},
\end{eqnarray*}
where we used $\sum_{j\in I}\wpi(j)P(j,d)=\pi(d)$ that follows from (\ref{Eq3}).
Since $S_0\sim \hbox{Geometric}(\pi(I))-1$ and $-S_{-1}\sim \hbox{Geometric}(\pi(I))$, the sequence 
%CAMBIAR EN bas on a mis n\in \ZZ
$(S_n: n\in \ZZ)$ is a stationary renewal sequence in $\ZZ$,
see (\ref{eqnxid0}),  (\ref{eqnxid1}), (\ref{eqnxid2}) and  (\ref{eqnxid3}).
Then, 
\begin{equation}
\label{eqnu1}
\forall t\ge 0: \PP(t\in \mS)=\pi(I).
\end{equation}

\noindent Now define $\mN=\{n\in \ZZ: S_{n+1}-S_{n}\ge 2\}$ that index  
the nonempty connected components of $\ZZ\setminus \mS$. When $n\neq 0$, we have $n\in \mN$ if
$\tau^n\ge 2$. From the construction 
the class of sequences
$\left((W_t: t\!=\!S_n\!+\!1,...,S_{n+1}\!-\!1): n\!\in\!  \mN\right)$ are i.i.d. and
\begin{equation}
\label{eqa3}
\forall n\!\in \!\mN, n\neq 0: 
(W_t: t\!=\!S_n\!+\!1,...,S_{n+1}\!-\!1)\!\sim \!(Z_k: k\!=\!0,...,\tau-2).
\end{equation}
On the other hand if $S_0-S_{-1}>1$ we have,
\begin{equation}
\label{eqa3'}
(W_t: t=S_{-1}+1,...,
%CAMBIAR On supprime W_
{S_0}-1)=(Z^{-1}_0,...Z^{-1}_{\tau^{-1}-2},Z'_1,...,Z'_{\tau'-1}),
\end{equation}
where $Z^{-1}\sim Z(
%CAMBIAR On ajoute U_{-1}
U_{-1}(Y_{-1}))$ and $Z'=(Z'_0=Z^{-1}_{\tau^{-1}-2},...,Z'_{\tau'-1})\sim Z$.
%NOUVEAU: On n'utilise pas l'equation qui suit 
%\begin{equation}
%\label{eqzsta}
%\PP(W_t\in \cdot| W_t\in \E)\sim \wpi_\E.
%\end{equation}
%NOUVEAU: On elimine les lignes qui suivent parce que la stationnarite est de le depart
%\noindent By construction it follows that $W[S_{-1}, 0)$ and  $W[0,S_1)$ are 
%equally distributed and a recurrent argument gives that $W_{\cdot+S_n}\sim W$ for all $n\in \ZZ$.
%The times $S_n\in \mS$ are times of regeneration of the process $W$, this means that
%for all $n\in \ZZ$, the process $(W_{S_n+t}: t> S_n)$ is independent of
%$(S_l: l\le n)$ and all these processes  $\left((W_{S_n+t}:t>0): n\in \ZZ\right)$ have the same distribution.
%Then, the process $\mfX$ that shifts the time of the process $W$, with a random variable $\ell$ 
%uniformly distributed in $[S_{-1},0)$,  is stationary. This follows from \cite{Asm}, p. 169-171. 

\medskip

\noindent It is left to show that $W$ is a Markov chain with the same transition 
probabilities as $X$. Let 
% CAMBIAR t por m $t\in \ZZ$. Let $W_t=a, W_{t+1}=b$. Se agrego Assume 
$m\in \ZZ$. Assume $W_m=a, W_{m+1}=b$. 

\medskip

%NOUVEAU: dans la phrase qui suit on a mis a nouveu a_{-l} au lieu de c_{-l}
\noindent Let $a,b\in I$, or $a,b\in \E$, or $a\in I,b\in \E$. From
the definition of $W$, for all $a_{-l}\in I\cup \E$, $l\ge 1$, and 
for any $m\in\ZZ$ one has
$$
\PP(W_{m+1}\!=\!b|W_m\!=\!a,W_{m-l}\!=\!a_{-l},l\ge 1)\!=\!\PP(W_{m+1}\!=\!b|W_m\!=\!a).
$$
Also for each one of these couples $a,b$ we have  $\PP(W_{m+1}=b|W_m=a)=P(a,b)$. In fact, when $a,b\in I$ 
this follows from $\PP(W_{m+1}=b|W_m=a)=Q(a,b)\theta(a,b)$. If
$a,b\in \E$ this follows from (\ref{eqa3}) and (\ref{eqa3'}). Let us show it for $a\in I,b\in \E$. 
From the construction,
%CAMBIAR on ajoute la reference a l'equation qui suit
(\ref{eqtheta}) and (\ref{eqc0}), we get,
\begin{equation}
\label{eqc1}
\PP(W_{m+1}=b|W_m=a)\!=\!\sum_{j\in I}Q(a,j) (1-\theta(a,j))\frac{P(a,b)}{P(a,\E)}=\sum_{j\in I}\wpi(j)P(a,b)=P(a,b).
\end{equation}

\medskip

%NOUVEAU: dans la phrase qui suit le , then a ete enleve. Mais j'ai a nouveau mis a_{-l} au lieu de c_{-l}
\noindent Let us now take $a\in \E$, $b\in I$. We set $a_0=a$. Notice that
$$
\PP(W_{m+1}\!=\!b |W_{m-l}\!=\!a_{-l}, l\!\ge\! 0)\!=\!\PP(W_{m+1}\!=\!b|W_{m-l}\!=\!a_{-l}, l\!=\!0,..,k),
$$
where $k$ is the first $k\ge 1$ such that $a_{-k}\in I$. 
%NOUVEAU: dans la ligne qui suivent on explique qu'on peut se restreindre au cas fini
(From (\ref{eqinfty}) we can assume $k$ to be finite). 
From (\ref{eqnu1}) and (\ref{eqsecond}) we have $\PP(W_{m-k}=a_{-k})=\pi(a_{-k})$, 
Now, we use (\ref{eqtheta}) and (\ref{eqa3}) 
to get
%NOUVEAU: Le array a ete reecrit,
\begin{eqnarray*}
&{}&\PP(W_{m+1}\!=\!b,W_{m-l}\!=\!a_{-l}, l\!=\!0,..,k)\\
&=&\!\!\pi(a_{-k})Q(a_{-k},b)(1-\theta(a_{-k},b))\frac{P(a_{-k},a_{-k+1})}{P(a_{-k},\E)}
\left(\!\prod_{l=1}^{k-1}\!\!P(a_{-k+l},a_{-k+l+1}\right)\pi(I)\\
\!&=&\!\!\pi(a_{-k})\wpi(b)\pi(I)\!\prod_{l=0}^{k-1}P(a_{-k+l},a_{-k+l+1})
=\pi(a_{-k})\pi(b)\!\prod_{l=0}^{k-1}P(a_{-k+l},a_{-k+l+1}).
\end{eqnarray*}
%RECENT: J'ai pris toutes tes corrections dans cette formule. Mais, il faut modifier un petit peu, 
%on peut faire la somme sur b\in I parce que c'est la meme formule, mais pour faire la somme sur \E, 
%il faut passer par la somme sur la premiere fois qu'on atteint $I$, etant donne que le processus W est definit 
%entre des etats en I 
%CAMBIAR On supprime Then, when 
By summing on $b\in I$ we find $\PP(W_{m+1}\in I,W_{m-l}\!=\!a_{-l}, l\!=\!0,..,k)
=\!\pi(I)\pi(a_{-k})\prod_{l=0}^{k-1}P(a_{-k+l},a_{-k+l+1})$. On the other hand when $W_{m+1}\in \E$ we 
%CAMBIAR On elimine la phrase en bas parce qu'on peul l'avoir directement 
%decompose on the first time of hitting $I$ after $m+1$, which is 
%NOUVEAU La formule a ete completee
have
$$
\PP(W_{m+1}\!\in \!\E, W_{m-l}\!=\!a_{-l}, l\!=\!0,..,k)
%CAMBIAR ON elimine la ligne en bas
%=\!\!(\sum_{r=1}^\infty \pi(\E)^r) \pi(I)\pi(a_{-k})\prod_{l=0}^{k-1}P(a_{-k+l},a_{-k+l+1})\\
=\!\pi(\E)\pi(a_{-k})\prod_{l=0}^{k-1}P(a_{-k+l},a_{-k+l+1}).
$$
Hence
%CAMBIAR En bas on a change le $$..$$
$\PP( W_{m-l}\!=\!a_{-l}, l\!=\!0,..,k)=\!\pi(a_{-k})\prod_{l=0}^{k-1}P(a_{-k+l},a_{-k+l+1})$. So, we get
$$
\PP(W_{m+1}\!=\!b| W_{m-l}\!=\!a_{-l}, l\!=\!0,..,k)=\pi(b).
$$
Since $(H1)$ gives $P(a,b)=\pi(b)$, we have proven 
$\PP(W_{m+1}\!=\!b|W_m\!=\!a,W_{m-l}\!=\!a_{-l}, l=1,..,k)=P(a,b)$.
Hence,
$W$ is a Markov chain having the same transition probabilities as $X$. Then, the proof of Theorem \ref{thmreg} is 
complete.
\end{proof}

\bigskip

\section*{Acknowledgments.}

This work was supported by the Center for Mathematical Modeling ANID Basal
Project FB210005. The authors thank Prof. Jean-Ren\'e 
Chazottes from CNRS for his interest in their work. We also thank an
anonymous referee for pointing out an error in a previous version of
this paper.

\section*{Data Availability Statement.}
No datasets were generated or analysed during the current study.

\section*{Conflict of interest Statement.}
The authors have no competing interests to declare that are relevant
to the content of this article.

\end{document}